\newtheorem{thm}{Theorem}
\newtheorem{prop}{Proposition}
\newtheorem{lemma}{Lemma}
\newcommand{\spin}{\mathfrak{s}}
\theoremstyle{definition}
\newtheorem{conj}{Conjecture}
\theoremstyle{remark}
\newtheorem{remark}{Remark}
\newtheorem{example}{Example}
    \def\HSt{%
       \setbox0=\hbox{$\widehat{\mathit{HS}}$}
       \setbox1=\hbox{$\mathit{HS}$}
       \dimen0=1.1\ht0
       \advance\dimen0 by 1.17\ht1
       \smash{\mskip2mu\raise\dimen0\rlap{%
          \begin{turn}{180}
              {$\widehat{\phantom{\mathit{HS}}}$}
           \end{turn}} \mskip-2mu    
                \mathit{HS}
    }{\vphantom{\widehat{\mathit{HS}}}}{}}
    \def\HMt{%
       \setbox0=\hbox{$\widehat{\mathit{HM}}$}
       \setbox1=\hbox{$\mathit{HM}$}
       \dimen0=1.1\ht0
       \advance\dimen0 by 1.17\ht1
       \smash{\mskip2mu\raise\dimen0\rlap{%
          \begin{turn}{180}
              {$\widehat{\phantom{\mathit{HM}}}$}
           \end{turn}} \mskip-2mu    
                \mathit{HM}
    }{\vphantom{\widehat{\mathit{HM}}}}{}}
    \def\HMIt{%
       \setbox0=\hbox{$\widehat{\mathit{HMI}}$}
       \setbox1=\hbox{$\mathit{HMI}$}
       \dimen0=1.1\ht0
       \advance\dimen0 by 1.17\ht1
       \smash{\mskip2mu\raise\dimen0\rlap{%
          \begin{turn}{180}
              {$\widehat{\phantom{\mathit{HMI}}}$}
           \end{turn}} \mskip-2mu    
                \mathit{HMI}
    }{\vphantom{\widehat{\mathit{HMI}}}}{}}
\newcommand{\HStil}{\widetilde{\mathit{HS}}}
    \newcommand{\HMIb}{\overline{\mathit{HMI}}}
\newcommand{\HMIf}{\widehat{\mathit{HMI}}}
\newcommand{\HMItil}{\widetilde{\mathit{HMI}}}
\newcommand{\Ctil}{\widetilde{C}}
\newcommand{\HMtil}{\widetilde{\mathit{HM}}}
\newcommand{\Pin}{\mathrm{Pin}(2)}
\newcommand{\q}{\mathfrak{q}}
\newcommand{\p}{\mathfrak{p}}
\newcommand{\ztwo}{\mathbb{F}}
\newcommand{\BN}{\widetilde{\mathrm{BN}}}
\newcommand{\Br}{\ztwo[Q]/Q^2}
\newcommand{\vm}{\mathbf{v}_-}
\newcommand{\vp}{\mathbf{v}_+}
\newcommand{\Skein}[1]{\raisebox{-3ex}{\includegraphics[height=7ex]{#1}}}
\begin{document}
\title[Khovanov homology and involutions]{Khovanov homology in characteristic two and involutive monopole Floer homology}

\author{Francesco Lin}
\address{Department of Mathematics, Princeton University and School of Mathematics, Institute for Advanced Study} 
\email{fl4@math.princeton.edu}

\begin{abstract}
We study the conjugation involution in Seiberg-Witten theory in the context of the Ozsv\'ath-Szab\'o and Bloom's spectral sequence for the branched double cover of a link $L$ in $S^3$. We prove that there exists a spectral sequence of $\Br$-modules (where $Q$ has degree $-1$) which converges to $\HMItil_*(\Sigma(L))$, an involutive version of the monopole Floer homology of the branched double cover, and whose $E^2$-page is a version of Bar Natan's characteristic two Khovanov homology of the mirror of $L$. We conjecture that an analogous result holds in the setting of $\Pin$-monopole Floer homology.
\end{abstract}

\maketitle

The interaction between quantum and Floer theoretic invariants in low-dimensional topology has spurred a lot of research activity in recent years. Perhaps the most well-studied from this perspective are the various spectral sequences starting from Khovanov homology (\cite{Kho},\cite{Bar}) and converging to (versions of) Heegaard Floer homology (\cite{OSBr}), monopole Floer homology (\cite{Blo}), and singular instanton homology (\cite{KMK}). These have been recently organized in a broad conceptual picture (\cite{BHL}).
\par
Another fruitful trend in the past few years has been the study of $\Pin$-symmetry in Seiberg-Witten Floer homology, where $\Pin$ is $S^1\cup j\cdot S^1\subset \mathbb{H}$. This was employed by Manolescu (\cite{Man}) to disprove the longstanding Triangulation conjecture. Morse-theoretic (\cite{KM}) counterparts of his invariants were constructed by the author in \cite{Lin}, and the extra symmetry has also been employed with a different flavor to define an involutive version of Heegaard Floer homology \cite{HM}. Intuitively, the latter corresponds to the subgroup $\mathbb{Z}_4=\langle j\rangle\subset \Pin$.
\\
\par
The goal of this paper is to find a common ground for the two aforementioned topics. Before stating the main result, we briefly introduce its protagonists. In what follows, $\ztwo$ is the field with two elements.
\begin{itemize}
\item For each link $L$ in $S^3$ there is a combinatorially defined invariant $\BN^2_{*,*}({L})$ introduced by Bar Natan \cite{BN2}, which is a bigraded module over $\ztwo[u]/u^2$, where  $u$ acts with bidegree $(-2,0)$;
\item For each compact oriented three-manifold $Y$ we will define in the present paper a Floer theoretic invariant $\HMItil_*(Y)$, called \textit{involutive monopole Floer homology}. This is a relatively graded module over $\ztwo[Q]/Q^2$, where $Q$ has degree $-1$ (to be precise, we also need to fix a basepoint in the three-manifold to define this invariant).
\end{itemize}
For a link $L$ in $S^3$ denote its mirror by $\bar{L}$, and its branched double cover by $\Sigma(L)$.
\begin{thm}
For any link $L\subset S^3$, there exists a spectral sequence whose $E^2$-page is isomorphic to $\BN^2_{*,*}(\bar{L})$ and which converges to $\HMtil_*(\Sigma(L))$. After setting $u$ to be $Q$, this is a spectral sequence of $\Br$-modules.
\end{thm}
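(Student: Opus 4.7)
The plan is to refine Bloom's cube-of-resolutions construction of the spectral sequence from Khovanov homology to $\HMf(\Sigma(L))$ so that the conjugation symmetry is built in at every stage, and then analyze the resulting filtered complex.

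For the first step, fix a diagram of $L$ with $n$ crossings and consider the cube of resolutions indexed by $v\in\{0,1\}^n$. Each complete resolution $L_v$ is an unlink on $k_v$ components, so $\Sigma(L_v)\cong\#^{k_v-1}(S^1\times S^2)$. I would compute $\HMItil$ of these connected sums and identify the resulting $\Br$-module with the $k_v$-th tensor power of a Frobenius algebra over $\Br$, in such a way that the surgery maps along the edges of the cube recover the characteristic two merge and split operations of Bar Natan's theory. Concretely, I expect $\HMItil(\Sigma(L_v))$ to be isomorphic to $V^{\otimes k_v}$ where $V=\ztwo\langle\vp,\vm\rangle$ is endowed with the Frobenius algebra structure whose undeformed part is Khovanov's but whose comultiplication carries a $Q$-deformation matching Bar Natan's parameter $u$; this is the source of the identification $u=Q$ appearing in the statement.

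For the second step, one must upgrade Bloom's cube to a cube of \emph{iota-complexes}: each edge is realized by a chain map commuting with the conjugation involution $\iota$ up to an explicit homotopy, each two-dimensional face by a secondary homotopy measuring compatibility between these primary homotopies and Bloom's commuting homotopy for the square, and so on through all higher faces. One first fixes chain-level models of the cobordism maps together with their intertwining with $\iota$; the obstructions to promoting these to coherent data at higher faces should live in acyclic subcomplexes at each step, so I would construct the required data by induction on the dimension of the face. Taking mapping cones on $Q(1+\iota)$ then yields an honest cube of chain complexes of $\Br$-modules whose total complex computes $\HMItil(\Sigma(L))$ by iterated application of the involutive surgery long exact sequence, just as in Bloom's argument.

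The cube filtration on this total complex produces a spectral sequence whose $E^1$-page is, by step one, the Bar Natan chain complex of $\bar L$ in characteristic two with deformation parameter $Q$; its $E^2$-page is therefore $\BN^2_{*,*}(\bar L)$ as required, and the $\Br$-module structure is inherited. The main obstacle is the coherence of step two: Bloom already needs secondary homotopies to assemble individual surgery exact triangles into a cube of chain complexes, and the involutive refinement introduces a parallel tower of homotopies that must be compatible in all dimensions. The heart of the argument will therefore be an inductive construction of these higher homotopies together with a vanishing statement for the relevant obstruction classes, exploiting the very special structure (connected sums of $S^1\times S^2$'s and elementary surgery cobordisms between them) of the objects appearing at the vertices and edges of the cube.
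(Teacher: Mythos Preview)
Your outline has the right architecture—a surgery cube whose $E^1$-page computes the Bar Natan complex and whose homology is the involutive Floer group—but it is missing the two specific mechanisms that make the argument run, and your description of the coherence step is not the one that actually works.

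First, on the coherence. You propose to build a cube of $\iota$-complexes by induction on face dimension, with obstructions ``living in acyclic subcomplexes.'' That is not what happens. The paper's construction works directly with the involutive chain complexes $\widetilde{CI}$ and transports Bloom's graph-associahedra of metrics, but each such family must now be accompanied by a regular homotopy to its $\jmath$-pushforward. The existence and essential uniqueness of these homotopies is an \emph{analytic} fact: the space of self-adjoint Dirac-type operators with simple spectrum (equivalently, of small regular perturbations on $S^3$ or $S^1\times S^2$) is connected and simply connected, because the non-simple locus has codimension three. This is also exactly why the blow-up formula $\HMItil(W\#\overline{\mathbb{C}P}^2)=0$ holds explicitly at the chain level in the involutive setting, and hence why the composition of two consecutive surgery maps is chain-nullhomotopic. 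Without this input you cannot establish the surgery exact triangle in the involutive theory, and your inductive step has no vanishing statement to appeal to. (By contrast, in the $\Pin$-theory the higher homotopy of this space obstructs the analogous argument.)

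Second, on the $E^1$-page. You expect the $Q$-deformation of Khovanov's Frobenius algebra to appear, but you do not say why. The source is the computation that $S^2\times S^2\setminus(D^4\amalg D^4)$ induces multiplication by $Q$ on $\HMItil(S^3)$, rather than zero as in ordinary $\HMtil$; this is the branched double cover of a torus in $S^4$ and is what produces the $u$-terms in Bar Natan's merge and split. Moreover, the identification $\HMItil(\#^k S^1\times S^2)\cong\Lambda^*H_1\otimes\Br$ is \emph{not canonical}: there is a diffeomorphism of $S^1\times S^2$ exchanging the two spin structures and acting by $\gamma\mapsto\gamma+Q$. To pin down the isomorphism you must choose a spin structure on each vertex, and for the edge maps to match Bar Natan's formulas these choices must extend over the elementary cobordisms. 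The paper does this via Turaev's bijection between spin structures on $\Sigma(L)$ and quasi-orientations of $L$, using the checkerboard orientation of each complete resolution. Only after this bookkeeping and a further change of basis do the cobordism maps become the Bar Natan $m$ and $\Delta$.
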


We discuss more in detail the objects involved. First, for each link $L$ there is an unreduced invariant $\mathrm{BN}^2_{*,*}(L)$ which is a bigraded module over $\ztwo[u]/u^2$. This is obtained from Bar-Natan's variant of Khovanov homology $\mathrm{BN}_{*,*}(L)$ (\cite{BN2}, where we use the notation of \cite{Turn}), which is defined \textit{only over fields of characteristic two}, by setting $u^2$ to be zero. Given a diagram $D$ for $L$ , $\mathrm{BN}^2_{*,*}(L)$ arises as the homology of a chain complex whose underlying vector space is $
CKh(D)\otimes \ztwo[u]/u^2$ (where $CKh(D)$ is the Khovanov chain complex, see for example \cite{Bar}), and whose differential is determined by the following modification of the Khovanov's \textsc{tqft}:
\begin{equation}\label{BN}
m:V\otimes V\otimes\ztwo[u]/u^2\rightarrow V\otimes\ztwo[u]/u^2\quad
 \left\{ \begin{array}{rl}
      \vp\otimes\vp&\mapsto \vp\\
\vp\otimes \vm&\mapsto \vm\\
      \vm\otimes \vp&\mapsto \vm\\ 
     \vm\otimes \vm&\mapsto u\vm
    \end{array}\right.
\end{equation}
\begin{equation}
    \Delta:V\otimes\ztwo[u]/u^2\rightarrow V\otimes V\otimes\ztwo[u]/u^2\quad
   \left\{ \begin{array}{rl}
      \vp&\mapsto \vp\otimes \vm+\vm\otimes \vp+u\vp\otimes\vp\\
      \vm&\mapsto \vm\otimes \vm
         \end{array} \right.
\end{equation}
Here as usual $V$ is a vector space generated by $\{\vp,\vm\}$, which have degree respectively $\pm1$. The graded Euler characteristic of $\mathrm{BN}^2_{*,*}$ is
\begin{equation*}
(1+q^{-2})\cdot V(L)
\end{equation*}
where $V(L)$ is the unnormalized Jones polynomial of $L$, and furthermore this invariant fits in an exact triangle
\begin{center}
\begin{tikzpicture}
\matrix (m) [matrix of math nodes,row sep=1em,column sep=1em,minimum width=2em]
  {
  \mathrm{BN}^2_{*,*}(L) && \mathrm{BN}^2_{*,*}(L)[-1]\\
  &\mathrm{Kh}(L)_{*,*}&\\};
  \path[-stealth]
  (m-1-1) edge node [above]{} (m-1-3)
  (m-2-2) edge node [left]{} (m-1-1)
  (m-1-3) edge node [right]{} (m-2-2)  
  ;
\end{tikzpicture}
\end{center}
Some interesting results regarding Bar Natan's homology are discussed in \cite{Turn}: for example, it can be used to adapt Lee's spectral sequence (\cite{Lee}) in characteristic two.
The statement of the main theorem involves the reduced version of the homology $\BN^2_{*,*}(L)$: for a given basepoint in $L$, this is the homology of the complex obtained after quotienting by the subcomplex of elements that have $\vm$ in the cycle containing the basepoint. This construction is in fact independent of the choice of basepoint (cfr. the result for Khovanov homology in \cite{ORS}).
\\
\par
On the Floer theory side, we will define in the present paper for a compact connected and oriented three-manifold $Y$ (together with a choice of basepoint $p$) the invariant $\HMItil_*(Y,p)$, which is a relatively graded module over $\ztwo[Q]/Q^2$, $Q$ having degree $-1$. This is the analogue in monopole Floer homology of the hat version of involutive Heegaard Floer homology \cite{HM}. There is a natural conjugation involution $\jmath$ on the set of spin$^c$ structure inducing canonical isomorphisms
\begin{equation*}
\jmath_*:\HMtil_*(Y,\spin)\rightarrow \HMtil_*(Y,\bar{\spin}),
\end{equation*}
where $\HMtil$ is the \textit{tilde} monopole Floer homology group introduced in \cite{Blo} in analogy with the hat version of Heegaard Floer homology (\cite{OS}). The most interesting ones for us are the spin$^c$ structures for which $\bar{\spin}=\spin$, which we call \textit{self-conjugate}. The main idea is to exploit this involution at the chain level in order to recover some version of $\mathbb{Z}_2$-equivariant homology (see for example \cite{SS}). As in \cite{HM}, we only consider a truncated version of the resolution of the Floer chain complex, and we expect the full resolution complex to recover some version of the $\Pin$-monopole Floer homology defined in \cite{Lin}. Among the basic properties, we have that it fits in an exact triangle
\begin{center}\label{triangle}
\begin{tikzpicture}
\matrix (m) [matrix of math nodes,row sep=1em,column sep=1em,minimum width=2em]
  {
  \HMtil_{*}(Y,p) && \HMtil_{*}(Y,p)[-1]\\
  &\HMItil_{*}(Y,p) &\\};
  \path[-stealth]
  (m-1-1) edge node [above]{$1+\jmath_*$} (m-1-3)
  (m-2-2) edge node [left]{} (m-1-1)
  (m-1-3) edge node [right]{} (m-2-2)  
  ;
\end{tikzpicture}
\end{center}
Furthermore, for $S^3$ we have $\HMItil(S^3)=\Br$, and cobordisms $W$ equipped with a properly embedded arc $\gamma$ from the incoming component to the outgoing one induce maps in a functorial fashion. For completeness, we will also introduce the \textit{to}, \textit{from} and \textit{bar} versions of the invariants, which correspond respectively to the $+,-$ and $\infty$ flavors on the Heegaard Floer side. 
\begin{remark}We will generally drop the basepoint from the notation when it is non strictly necessary.
\end{remark}

\begin{example}
We discuss the simplest (and easily generalizable) example. For $K$ the trefoil knot, one can compute that $\BN^2_{*,*}(K)$ is
\begin{center}
\begin{tikzpicture}
\matrix (m) [matrix of math nodes,row sep=0.5em,column sep=0.5em,minimum width=2em]
  {{}&-3&-2&-1&0\\
  1&&&&\ztwo\\
  -1&&&&\ztwo\\
  -3&&&&\\
  -5&&\ztwo&&\\
  -7&&&&\\
  -9&\ztwo&&&\\};
  \path[-stealth]
  (m-2-5.east) edge[bend left] (m-3-5.north east);
  \draw(m-1-1.south west)--(m-1-5.south east);
  \draw(m-1-1.north east)--(m-7-1.south east);
  \end{tikzpicture}
\end{center}
where the arrow indicates the action of $u$. Now, as $K$ is alternating, $Y=\Sigma(K)$ is an $L$-space (see \cite{OSBr} for the proof in the Heegaard Floer setting). As $K$ has determinant $3$, it has one self-conjugate spin$^c$ structure and a pair of non-isomorphic conjugate ones. It will be shown that the former contributes $\Br$ to $\HMItil_*(\Sigma(L))$ (as it is the case for the only self-conjugate spin$^c$ structure on $S^3$), while the latter contributes $\ztwo\oplus\ztwo[-1]$. In particular, the spectral sequence collapses at the $E^2$-page.
\end{example}
\vspace{0.5cm}

The construction of the surgery spectral sequence mentioned in the first point closely follows the work in \cite{Blo}. The key difference arises when one tries to identify the $E^1$ page of the spectral sequence associated to the branched double cover. This is because for involutive monopole Floer homology the map
\begin{equation*}
\HMItil_*(S^2\times S^2\setminus (D^4\amalg D^4)): \HMItil_*(S^3)\rightarrow \HMItil_*(S^3)
\end{equation*}
is multiplication by $Q$ on $\HMItil_*(S^3)=\Br$ (cfr. Proposition $3$ in \cite{Lin2} for the analogous statement in the $\Pin$-setting). This is not the case in $\HMtil$, as there the induced map is zero because $b_2^+$ of the cobordism is strictly positive (see for example Chapter $3$ of \cite{KM}). As $S^2\times S^2$ is the branched double cover of $S^4$ over a standardly embedded torus, this implies that when describing the $E^1$-page of the spectral sequence there will be some additional terms containing $Q$. These correspond, after suitable identifications, to the terms involving $u$ in the maps (\ref{BN}).
\par
Related to this, we have an isomophism
\begin{equation}\label{ident}
\HMItil_*(S^2\times S^1)= \Lambda^* (\ztwo\langle\gamma\rangle)\otimes \Br
\end{equation}
for $\gamma$ a generator of $H_1(S^2\times S^1)$ (in degree $-1$) (see Chapter $4$ of \cite{Lin} for the analogue in the $\Pin$-setting), but such an isomorphism is not canonical. Indeed $S^2\times S^1$ has two spin structures (both inducing the unique self-conjugate spin$^c$ structure), and there is an orientation preserving self-diffeomorphism $\varphi$ of $S^2\times S^1$ exchanging them (\cite{GS}). More explicitly, if $\gamma:S^1\rightarrow SO(3)$ is a generator of $\pi_1(SO(3))= \mathbb{Z}_2$, we can take $\varphi$ to be
\begin{equation*}
\varphi: S^2\times S^1\rightarrow S^2\times S^1\qquad (x,\vartheta)\mapsto (\gamma(\vartheta)\cdot x, \vartheta).
\end{equation*}
Then $\varphi$ induces the non trivial automorphism (as a $\Br$-module) of $\HMItil_*(S^2\times S^1)$ given by
\begin{equation*}
\gamma\mapsto \gamma+Q.
\end{equation*}
Hence to fix an identification as in (\ref{ident}), and more generally for a connected sum of $S^2\times S^1$s, we need to fix a spin structure. This is unlike the case for the identification
\begin{equation*}
\HMtil_*(S^2\times S^1)= \Lambda^* (\ztwo\langle\gamma\rangle),
\end{equation*}
as this is unique for degree reasons.
\\
\par
The reader may wonder why we are working in the involutive setting instead of the $\Pin$-one. The main complication in the latter is that while one can estabilish a surgery exact triangle (see \cite{Lin2}), one of the maps appearing in the statement is generally not the one induced by the corresponding $2$-handle attachment cobordism. This is related to the fact that the blow-up formula in that setting shows that the composition of two consecutive handle attachment maps in the triangle might not be zero. Nevertheless, its image is contained in the image of $Q^2$, and as $Q^2$ is zero in the involutive setting one might expect this setting to be more tractable. Most of the technical work in the present paper relies on the fact that the composition is in fact explicitly chain homotopic to zero. From a more analytical perspective, the difference is related to the fact that a certain space of self-adjoint operators that naturally arises in the construction is simply-connected, but has non-trivial higher homotopy groups. In any case, we are lead to state the following conjecture.
\begin{conj}
Given a link $L\subset S^3$, there exists a spectral sequence whose $E^2$-page is $\BN^3_{*,*}(\bar{L})$ which converges to $\HStil_*(\Sigma(L))$. After setting $u$ to be $Q$, this is a spectral sequence of $\ztwo[Q]/Q^3$-modules.
\end{conj}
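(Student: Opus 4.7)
The plan is to transport the proof of Theorem~1 into the $\Pin$-equivariant setting, replacing every length-two truncation by a length-three truncation so that $\ztwo[Q]/Q^3$ takes over the role of $\Br$. First I would define $\HStil_*(Y,p)$ as the homology of a length-three quotient of the natural resolution of the monopole Floer complex under the $\Pin$-action, obtaining a relatively graded $\ztwo[Q]/Q^3$-module that fits into an exact triangle with $\HMtil$ and admits functorial maps induced by cobordisms equipped with a properly embedded arc. This construction should be a direct enhancement of the definition of $\HMItil$, which corresponds to the further quotient by $Q^2$.

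Next I would verify the key local computations governing the $E^1$-page. In particular: $\HStil(S^3)=\ztwo[Q]/Q^3$; the cobordism $S^2\times S^2\setminus(D^4\amalg D^4)$ induces multiplication by $Q^2$ on $\HStil(S^3)$, in the spirit of Proposition~3 of \cite{Lin2}; and $\HStil(\#^n S^2\times S^1)$ is a $\ztwo[Q]/Q^3$-module whose explicit dependence on the choice of spin structure is mediated by $\gamma\mapsto \gamma+Q$, exactly as in the $\HMItil$ case. Through the branched-double-cover dictionary that sends an oriented resolution to $S^3$ and the standard torus in $S^4$ to $S^2\times S^2$, the $Q^2$ appearing here should produce exactly the $u^2$ coefficients prescribed by the Bar~Natan $\BN^3$ TQFT obtained by setting $u^3=0$ in the Bar~Natan Frobenius structure. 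Running Bloom's surgery spectral sequence \cite{Blo} on the cube of resolutions of a diagram for $\bar L$, while tracking $\Pin$-symmetry at each vertex, should then assemble a filtered complex whose $E^2$-page is identified with $\BN^3_{*,*}(\bar L)$ and which converges to $\HStil_*(\Sigma(L))$.

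The hardest step is precisely the one flagged just before the statement of the conjecture: in the $\Pin$ setting $Q^2\neq 0$, so the composition of two consecutive handle-attachment maps in the surgery triangle is no longer automatically zero but only lies in $\mathrm{im}(Q^2)$. To obtain a genuine differential on the cube one must produce an explicit chain-level null-homotopy of this composition at the $\ztwo[Q]/Q^3$ level, refining the analogous null-homotopy constructed in the present paper for $\HMItil$. Analytically, this amounts to controlling a family of self-adjoint operators whose $\pi_1$ encodes the $Q$-correction and whose $\pi_2$ encodes the new $Q^2$-correction; building the corresponding parametrized moduli spaces coherently enough to produce a chain-level null-homotopy, and verifying that the resulting higher-order corrections fit together into an associative cube structure, is where the bulk of the technical work would lie.
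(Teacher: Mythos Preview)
The statement is a \emph{conjecture}; the paper does not prove it and explicitly explains why. Your proposal is therefore not being compared against a proof but against the paper's discussion of the obstacles, and on that score there is both a factual slip and a genuine gap.

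The factual slip: the cobordism $S^2\times S^2\setminus(D^4\amalg D^4)$ induces multiplication by $Q$, not $Q^2$, on $\HStil_*(S^3)$; this is exactly what Proposition~3 of \cite{Lin2} says, and it is the same formula as in the involutive case. Correspondingly, the Bar~Natan Frobenius structure has only $u$-terms (no $u^2$-terms); passing from $\BN^2$ to $\BN^3$ changes only the coefficient ring, not the \textsc{tqft}. The paper states explicitly that the $E^2$-page identification goes through in the $\Pin$ setting with no new phenomena.

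The genuine gap is in your last paragraph, and it is deeper than you indicate. The obstruction is not merely that the composition of two consecutive handle-attachment maps lands in $\mathrm{im}(Q^2)$ and one must find a null-homotopy. The paper points out that in the $\Pin$ surgery exact triangle one of the three maps is \emph{not} the map induced by the corresponding cobordism. Thus even if one builds a filtered cube whose $E^1$-differential consists of cobordism-induced maps and whose $E^2$-page is $\BN^3$, there is no mechanism available to identify the total homology with $\HStil_*(\Sigma(L))$: the inductive argument in Section~\ref{surgery} collapses the cube one coordinate at a time using the surgery triangle, and that step fails when the triangle's maps do not match the cube's. Producing higher null-homotopies over the cube does not by itself repair this, because the problem is the identification of the limit, not the well-definedness of the spectral sequence. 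This is precisely why the paper leaves the statement as a conjecture.
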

Here $\BN^3(\bar{L})$ is the reduced version of Bar Natan's homology in which we set $u^3$ to be zero, while $\HStil$ is the defined to be the mapping cone of a the chain map inducing the action of $V$ in $\Pin$-monopole Floer homology. One can study the analogous spectral sequence for the branched double cover in the $\Pin$-setting: while the arguments in the present paper allow one to identify the $E^2$ page in exactly the same way, the non trivial part is to show that it converges to the Floer homology of the branched double cover. The last step is indeed where the surgery exact triangle is needed.
\\
\par
\textit{Organization of the paper. }In Section \ref{involutive} we define involutive monopole Floer homology and discuss its main properties. In Section \ref{surgery}, we prove the link surgery spectral sequence in involutive monopole Floer homology for a link in a three-manifold. Finally, in Section \ref{barnatan} we study the spectral sequence associated to the branched double cover, and prove the main theorem.
\\
\par
\textit{Acknowledgements.} The author would like to thank Paolo Aceto, Ciprian Manolescu, Peter Ozsv\'ath and Zolt\'an Szab\'o for the helpful discussions.

\vspace{0.5cm}
\section{Involutive monopole Floer homology}\label{involutive}

The goal of this section is to introduce a package of invariants of compact, connected oriented three-manifolds called \textit{involutive monopole Floer homology}. These invariants should be considered as the analogue in our theory of involutive Heegaard Floer homology \cite{HM}. We start by discussing some formal properties, we then quickly review some constructions in monopole Floer homology, and finally we proceed with definitions and proofs. We assume the reader has some familiarity with monopole Floer homology (\cite{KM}); more specifically, the content of the notes \cite{Lin4} will be more than sufficient.
\\
\par
\textbf{Formal properties.}
For a given three-manifold $Y$, there exists a natural conjugation action $\jmath$ on the set of spin$^c$ structures $\mathrm{Spin}^c(Y)$. We call the fixed points \textit{self-conjugate spin$^c$ structures}. There exists a natural map
\begin{equation*}
\{\text{spin structures on $Y$}\}\rightarrow \{\text{self-conjugate spin$^c$ structures on $Y$}\}
\end{equation*}
which is surjective and such that each fiber has cardinality $2^{b_1(Y)}$ (cfr. Chapter $4$ of \cite{Lin}).
\par
To each three-manifold $Y$ equipped with $[\spin]\in \mathrm{Spin}^c(Y)/\jmath$, we associate relatively $\mathbb{Z}/d([\spin])\mathbb{Z}$-graded modules
\begin{equation*}
 \HMIt_*(Y,[\spin])\qquad \HMIf_*(Y,[\spin])\qquad \HMIb_*(Y,[\spin])
\end{equation*}
over $\ztwo[U,Q]/Q^2$, where $Q$ has degree $-1$ and $U$ has degree $-2$. Here $d([\spin])$ is the least common factor of all $\langle c_1(\spin),x\rangle$ for $x\in H_1(Y,\mathbb{Z})$. Furthermore, when $c_1$ is torsion (e.g. when the structure is self-conjugate) the relative $\mathbb{Z}$-grading can be lifted to an absolute $\mathbb{Q}$-grading. These are called \textit{involutive monopole Floer homology groups}, and fit in an exact sequence of $\ztwo[U,Q]/Q^2$-modules
\begin{center}
\begin{tikzpicture}
\matrix (m) [matrix of math nodes,row sep=2em,column sep=1em,minimum width=2em]
  {
  \HMIt_{*}(Y,[\spin]) && \HMIf_{*}(Y,[\spin])\\
  &\HMIb_{*}(Y,[\spin]) &\\};
  \path[-stealth]
  (m-1-1) edge node [above]{$j_*$} (m-1-3)
  (m-2-2) edge node [left]{$i_*$} (m-1-1)
  (m-1-3) edge node [right]{$p_*$} (m-2-2)  
  ;
\end{tikzpicture}
\end{center}
where $i_*$ and $p_*$ have degree zero, while $j_*$ has degree one. When $[\spin]$ is not self-conjugate, there are natural identifications as $\ztwo[U,Q]/Q^2$-modules
\begin{equation}\label{nonspin}
\HMIt_{*}(Y,[\spin]) \equiv\HMIt_{*}(Y,\spin)\oplus \HMIt_{*}(Y,\spin)[-1]\equiv\HMIt_{*}(Y,\bar{\spin})\oplus \HMIt_{*}(Y,\bar{\spin})[-1]
\end{equation}
where the action of $Q$ is trivial. For a self conjugate spin$^c$ structure $\spin$, the exact triangle
\begin{center}
\begin{tikzpicture}
\matrix (m) [matrix of math nodes,row sep=1em,column sep=1em,minimum width=2em]
  {
  \HMt_{*}(Y,\spin) && \HMt_{*}(Y,\spin)[-1]\\
  &\HMIt_{*}(Y,\spin) &\\};
  \path[-stealth]
  (m-1-1) edge node [above]{$1+\jmath_*$} (m-1-3)
  (m-2-2) edge node [left]{$$} (m-1-1)
  (m-1-3) edge node [right]{$$} (m-2-2)  
  ;
\end{tikzpicture}
\end{center}
holds, where
\begin{equation*}
\jmath_*:\HMt_*(Y,\spin)\rightarrow \HMt_*(Y,{\spin}),
\end{equation*}
is the conjugation isomorphism. We can also define the total Floer groups
\begin{equation*}
\HMIt_*(Y)=\bigoplus_{[\spin]\in \mathrm{Spin}^c(Y)/\mathbb{\jmath}}\HMIt_*(Y,[\spin]).
\end{equation*}
The same statements hold for the other flavors. As a basic example, we have
\begin{align*}\label{S3}
\HMIb_*(S^3)&=\ztwo[Q,U^{-1},U]/Q^2\\
\HMIf_*(S^3)&=\ztwo[Q,U]/Q^2\\
\HMIt_*(S^3)&=\left(\ztwo[Q,U^{-1},U]/Q^2\right)/\left(\ztwo[Q,U]/Q^2\right),
\end{align*}
where the gradings are shifted so that the bottom element of the last group has degree zero. As in the usual case, one can define completed version of the invariants, which are denoted with a bullet. The invariant is functorial, meaning that a cobordism $W$ from $Y_0$ to $Y_1$ induces a map of $\ztwo[U,Q]/Q^2$-modules
\begin{equation*}
\HMIt_{\bullet}(W):\HMIt_{\bullet}(Y_0)\rightarrow \HMIt_{\bullet}(Y_1).
\end{equation*}
This map respects the decomposition of spin$^c$ structures up to conjugation on $W$, and furthermore if $W_0$ and $W_1$ are composable, i.e. the outgoing boundary of $W_0$ coincides with the incoming boundary of $W_1$, then
\begin{equation*}
\HMIt_{\bullet}(W_1\circ W_0)=\HMIt_{\bullet}(W_1)\circ\HMIt_{\bullet}(W_0).
\end{equation*}

In the present paper, we will be mostly interested in the following version, which is based on the construction of $\HMtil$ in \cite{Blo}. For a given basepoint $p\in Y$, we can define the relatively graded $\ztwo[Q]/Q^2$-module $\HMItil_*(Y,p)$, which fits in the exact triangle of $\ztwo[Q]/Q^2$-modules
\begin{center}
\begin{tikzpicture}
\matrix (m) [matrix of math nodes,row sep=1em,column sep=1em,minimum width=2em]
  {
  \HMIt_{*}(Y) && \HMIt_{*}(Y)\\
  &\HMItil_{*}(Y,p) &\\};
  \path[-stealth]
  (m-1-1) edge node [above]{$\cdot U$} (m-1-3)
  (m-2-2) edge node [left]{} (m-1-1)
  (m-1-3) edge node [right]{} (m-2-2)  
  ;
\end{tikzpicture}
\end{center}
and
\begin{center}\label{triangle}
\begin{tikzpicture}
\matrix (m) [matrix of math nodes,row sep=1em,column sep=1em,minimum width=2em]
  {
  \HMtil_{*}(Y,p) && \HMtil_{*}(Y,p)[-1]\\
  &\HMItil_{*}(Y,p) &\\};
  \path[-stealth]
  (m-1-1) edge node [above]{$1+\jmath_*$} (m-1-3)
  (m-2-2) edge node [left]{} (m-1-1)
  (m-1-3) edge node [right]{} (m-2-2)  
  ;
\end{tikzpicture}
\end{center}
As the versions discussed before, this invariant decomposes along equivalence classes in $\mathrm{Spin}^c(Y)/\jmath$, and the analogous identification of (\ref{nonspin}) holds. We have that
\begin{equation*}
\HMItil_*(S^3,p)=\Br,
\end{equation*}
the element $Q$ being in degree $0$. Given a cobordism $(W,\gamma)$ from $(Y_0,\gamma_0)$ to $(Y_1,\gamma_1)$, where $\gamma$ is a properly embedded arc in $W$ from $p_0$ to $p_1$, one can define a map
\begin{equation*}
\HMItil_*(W,\gamma):\HMItil_*(Y_0,p_0)\rightarrow \HMItil_*(Y_1,p_1).
\end{equation*}
This assignment is functorial (in the category of based three-manifolds and cobordisms with an embedded arc), and is compatible with the maps in the \textit{to} version and the triangle above.

\vspace{0.5cm}
\textbf{The topology of the space of operators with simple spectrum. }We discuss a simple result which underlines the construction of the invariants. As a warmup, consider $i\mathfrak{u}(2)$, the space four-dimensional space of self-adjoint operators on $\mathbb{C}^2$. The only elements with non-simple spectrum are the real multiples of the identity. This implies that the space of self-adjoint operators with simple spectrum on $\mathbb{C}^2$ is connected and simply connected (while it has non-trivial higher homotopy groups), as the locus of operators with non-simple spectrum has codimension three.
\par
In the setting we are interested, let $S\rightarrow Y$ be a spinor bundle on a three-manifold. Following Chapter $12$ of \cite{KM}, we consider the space $\mathrm{Op}^{sa}$ of self-adjoint Fredholm operators of the form
\begin{equation*}
D_{B}+h:L^2_k(Y;S)\rightarrow L^2_{k-1}(Y;S)
\end{equation*}
where $D_{B}$ is the Dirac operator associated to a smooth connection $B$ on $S$ and $h$ is a self-adjoint operator that extends to a bounded operator form $L^2_j(Y;S)$ to itself, for $j\leq k$ (here $k$ is chosen to be large enough). These is the kind of operators that arises when analyzing the reducible solutions of the Seiberg-Witten equations. In particular, before perturbing the equations, a reducible solution $[B,0]$ gives rise to a family of solutions on the blow up corresponding to the projectivizations of the eigenspaces of the Dirac operator $D_B$.
\begin{lemma}\label{sc}
The space of operators in $\mathrm{Op}^{sa}$ with simple spectum is connected and simply connected.
\end{lemma}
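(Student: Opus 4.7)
The plan is to extend the finite-dimensional codimension argument sketched for $i\mathfrak{u}(2)$ to the functional-analytic setting. First, observe that $\mathrm{Op}^{sa}$ is an affine space: any two such operators differ by an element of the Banach space of self-adjoint operators bounded on the relevant Sobolev scales. Hence $\mathrm{Op}^{sa}$ is contractible, and it suffices to understand the complement of the locus $\Sigma\subset\mathrm{Op}^{sa}$ consisting of operators with a repeated eigenvalue.

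The key step is to show that $\Sigma$ is a countable union of submanifolds of real codimension at least $3$. To do so, I would fix $L_0\in\Sigma$ with an eigenvalue $\lambda$ of multiplicity $k\geq 2$ and apply Kato's analytic perturbation theory. In a small neighborhood $U$ of $L_0$ in $\mathrm{Op}^{sa}$ the spectral projection onto the part of the spectrum near $\lambda$ varies smoothly, so restriction to the corresponding $k$-dimensional eigenspace yields a smooth map
\begin{equation*}
\Phi:U\longrightarrow i\mathfrak{u}(k).
\end{equation*}
An operator $L\in U$ lies in $\Sigma$ at a nearby eigenvalue precisely when $\Phi(L)$ is a scalar multiple of the identity, i.e. lies in the real line $\mathbb{R}\cdot I\subset i\mathfrak{u}(k)$, which has codimension $k^2-1\geq 3$. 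Since we can independently vary $L$ by adding self-adjoint endomorphisms supported on the eigenspace (extended by zero and included into $\mathrm{Op}^{sa}$ via smooth cutoffs), $\Phi$ is a submersion near $L_0$; hence $\Phi^{-1}(\mathbb{R}\cdot I)\cap U$ is a submanifold of codimension $\geq 3$. Stratifying $\Sigma$ by the multiplicity pattern and running this argument at each stratum covers all of $\Sigma$.

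With this codimension bound in hand, connectedness and simple connectedness follow by standard transversality. Any continuous map $S^n\to\mathrm{Op}^{sa}$ with $n\in\{0,1\}$ can be smoothly approximated and made transverse to each stratum of $\Sigma$; since each stratum has codimension $\geq 3>n$, the perturbed map misses $\Sigma$ altogether. Applied to $n=0$ this gives path-connectedness of $\mathrm{Op}^{sa}\setminus\Sigma$ (two points are first joined in the contractible space $\mathrm{Op}^{sa}$, then the path is perturbed off $\Sigma$); applied to $n=1$ together with the analogous transversality for null-homotopies $D^2\to\mathrm{Op}^{sa}$ (here $n=2<3$), one concludes $\pi_1(\mathrm{Op}^{sa}\setminus\Sigma)=0$.

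The main obstacle is the rigorous passage from the local Kato model to genuine submanifold structure and transversality in the Banach setting, in particular verifying that $\Phi$ is a submersion and that the complement of a countable union of closed finite-codimension submanifolds admits the required approximation theorems. Once the local reduction to $i\mathfrak{u}(k)$ is in place, however, all of this is standard, and the statement reduces to the elementary observation that removing the codimension-$3$ scalar locus from $i\mathfrak{u}(k)$ does not change $\pi_0$ or $\pi_1$.
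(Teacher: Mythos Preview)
Your proposal is correct and follows essentially the same approach as the paper: both arguments show that the locus of operators with non-simple spectrum has codimension at least $3$ (via the traceless self-adjoint endomorphisms of the degenerate eigenspace, of dimension $k^2-1\geq 3$), and then conclude by standard transversality. The only difference is packaging: the paper, following Section~12.6 of \cite{KM}, parametrizes the bad locus as the image of the countable family of Fredholm maps $F_n:\mathrm{Op}^{sa}_n\times\mathbb{R}\to\mathrm{Op}^{sa}$, $(L,\lambda)\mapsto L+\lambda$ (where $\mathrm{Op}^{sa}_n$ is the locus where $0$ has multiplicity exactly $n$), whereas you build the same local model directly via Kato's spectral projections.
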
 
\begin{proof}
This boils down to the example above via the following observation from Section $12.6$ of \cite{KM}. The space of operators whose spectrum is not simple is a countable union of Fredholm maps $F_n$ defined as follows. Take the domain of $F_n$ to be $\mathrm{Op}^{sa}_n\times \mathbb{R}$, where $\mathrm{Op}^{sa}_n\subset \mathrm{Op}^{sa}$ is the subspace of operators for which $0$ is an eigenvalue of multiplicity exactly $n$, and $F_n$ sends $(L,\lambda)$ to $L+\lambda$. This is a local embedding, and the normal bundle to its image at $L+\lambda$ is given by the space of traceless self-adjoint endomorphisms of $\mathrm{Ker}L$. In particular, each of the Fredholm maps has index at least $-3$, and the result readily follows from a standard transversality argument.
\end{proof}

\begin{remark}A case of particular interest for us is that of $S^3$ equipped with a metric of positive scalar curvature, e.g. the round one. In this case, for a small pertubation $\hat{\q}$ the perturbed Dirac operator $D_{B_0,\hat{\q}}$ associated to the trivial connection $B_0$ does not have kernel (see Section $22.7$ in \cite{KM}). Hence such a perturbation is regular if and only if the associated Dirac operator has simple spectrum. Therefore the lemma above can rephrased as follows: the space of regular small perturbations on $S^3$ is connected and simply connected.
\par
We will also use the following consequence of simply connectedness: any two paths of small regular perturbations $\gamma_0$ and $\gamma_1$ are homotopic, and any two such homotopies $\gamma_t$ and $\gamma'_t$ are homotopic relative to be boundary paths $\gamma_0,\gamma_1$.
\end{remark}

\vspace{0.5cm}
\textbf{The $U$-map in monopole Floer homology. }We quickly review the definition of the $U$-map in monopole Floer homology and $\HMtil(Y,p)$. To a given three-manifold $Y$ equipped with a Riemannian metric (which we generally drop from the notation), a spin$^c$ structure $\spin$ and a regular perturbation $\q$, we can associate the Floer chain complex $(\check{C}_*(Y,\spin,\q),\check{\partial})$. For two different choices of metrics and perturbations, the associated homology groups, denoted by $\HMt(Y,\spin,\q)$, differ by a canonical isomorphism induced by a continuation map.
\par
Suppose now we are given a basepoint $p\in Y$. Fix the round metric on $S^3$, and a small regular perturbation $\hat{\q}$, so that there are no irreducible critical points. From this data, we can construct a chain map
\begin{equation*}
\check{U}_{\hat{\q}}: \check{C}_*(Y,\spin,\q)\rightarrow \check{C}_*(Y,\spin,\q)
\end{equation*}
defining the $\ztwo[U]$-module structure on homology. This chain map is defined by counting trajectories on $[0,1]\times Y\setminus \mathrm{nbhd}(\{1/2\}\times\{p\})$ with cylindrical ends attached which converge on the $S^3$ end (which we think of as an incoming end, with perturbation $\hat{\q}$) to the critical point corresponding to the second negative eigenvalue of the Dirac operator. We can then define as in \cite{Blo} the chain complex 
\begin{align*}
\Ctil_*(Y,p,\spin,\q,\hat{\q})&=\check{C}(Y,\spin,\q)\oplus \check{C}(Y,\spin,\q)[1]\\
\tilde{\partial}&=\left(\begin{matrix}
\check{\partial} & 0\\
\check{U}_{\hat{\q}} & \check{\partial}
\end{matrix}\right).
\end{align*}
The homology is again independent up to canonical isomorphism of the choices of perturbations, and we call the result $\HMtil_*(Y,p,\spin)$. There is a natural two-step filtration, inducing the exact triangle
\begin{center}
\begin{tikzpicture}
\matrix (m) [matrix of math nodes,row sep=1em,column sep=1em,minimum width=2em]
  {
  \HMt_{*}(Y,\spin,\q) && \HMt_{*}(Y,\spin,\q)\\
  &\HMtil_{*}(Y,p,\spin,\q,\hat{\q}) &\\};
  \path[-stealth]
  (m-1-1) edge node [above]{$\cdot U$} (m-1-3)
  (m-2-2) edge node [left]{} (m-1-1)
  (m-1-3) edge node [right]{} (m-2-2)  
  ;
\end{tikzpicture}
\end{center}
We discuss in detail the independence with respect to the choice of perturbations. Suppose that $\q'$ and $\hat{\q}'$ are different choices of perturbations on $S^3$. We can choose a two paths of perturbations connecting our previous perturbations to the new ones. Furthermore, we can choose the path of small perturbations on $S^3$ such that at each time the Dirac operator has simple spectrum, so that each perturbation in the path is regular (see Lemma \ref{sc} and following remark). Now, using these families of perturbations we can constructs two maps
\begin{equation*}
\check{\Phi}, \check{K}:\check{C}_*(Y,\spin,\q)\rightarrow \check{C}(Y,\spin,\q')
\end{equation*}
such that
\begin{align*}
\check{\partial}\circ\check{\Phi}+\check{\Phi}\circ\check{\partial}&=0\\
\check{\partial}\circ\check{K}+\check{K}\circ\check{\partial}&=\check{U}_{\hat{\q}'}\circ \check{\Phi}+\check{\Phi}\circ\check{U}_{\hat{\q}}.
\end{align*}
While the first map is simply the continuation map inducing the canonical isomorphism in homology, the second map uses parametrized moduli spaces on a punctured $I\times Y$ where the perturbation on the additional incoming $S^3$ varies along the pat from $\hat{\q}$ to $\hat{\q}'$ chosen above. The key point is that at each time the perturbation is regular, so one can look at moduli spaces of solutions parametrized by the family of perturbations which converge on this end to the second unstable critical point. The map $\check{K}$ is defined by a suitable count of solutions in these moduli spaces, using the same formulas as in Chapter $25$ in \cite{KM}. It is worth pointing out that we are using that the various critical points have degrees differing by at least two, so that the $S^3$ does not contribute to the formulas when looking at codimension one strata.  Furthermore, while relevant analytical details for this moduli spaces of this kind of are not explicitly stated in \cite{KM}, the results there generalize without any significant modification because of the assumption that the perturbation on $S^3$ is regular at each point on the path. Finally, we can combine these maps into
\begin{equation*}
\tilde{\Phi}=\left(\begin{matrix}
\check{\Phi} & 0\\
\check{K} & \check{\Phi}
\end{matrix}\right): \Ctil_*(Y,p,\spin,\q,\hat{\q})\rightarrow \Ctil_*(Y,p,\spin,\q',\hat{\q}').
\end{equation*}
This map induces an isomorphism in cohomology: the two-step filtration on the complexes induces a spectral sequence, and the map induced on the $E^1$-page is an isomorphism (because the entries are continuation maps). Furthermore, two such maps are always chain homotopic because the space small regular perturbations is simply connected, so the induced isomorphism is canonical.
\par
More in general, we recall how the map induced by a cobordism together with a path $(W,\gamma)$ is defined. For For some choice of regular perturbation $\p$ one can define the chain map
\begin{equation*}
\check{m}(W,\spin,\p): \check{C}_*(Y_0,\spin_0,\q_0)\rightarrow\check{C}_*(Y_1,\spin_1,\q_1).
\end{equation*}
Using the path $\gamma$, one can define (after making additional choices, including a path from $\hat{\q}_0$ to $\hat{\q}_1$) a chain homotopy $\check{H}$ such that
\begin{equation*}
\check{\partial}\circ\check{H}+\check{H}\circ\check{\partial}=\check{U}_{\hat{\q}_1}\circ \check{m}(W,\spin,\p)+\check{m}(W,\spin,\p)\circ\check{U}_{\hat{\q}_0}.
\end{equation*}
We then define the chain map
\begin{equation*}
\tilde{m}(W,\spin,\gamma,\p)=\left(\begin{matrix}
\check{m}(W,\spin,\p) & 0\\
\check{H} & \check{m}(W,\spin,\p)
\end{matrix}\right): \check{C}_*(Y_0,p_0,\spin_0,\q_0,\hat{q}_0)\rightarrow\check{C}_*(Y_1,p_1,\spin_1,\q_1,\hat{\q}_1),
\end{equation*}
and denote the map induced in homology by $\tilde{m}(W,\spin,\p)$. Its functoriality properties follow from neck-stretching arguments, see Chapter $26$ in \cite{KM}.

\vspace{0.5cm}
\textbf{Involutive monopole Floer homology. } We finally discuss the definition and properties of involutive monopole Floer homology. We focus on the \textit{tilde} version both because it is the one we will be working with in the rest of the paper, and because the construction for the other cases is essentially simpler. The key idea is that there is a canonical identification of the space of configurations
\begin{equation*}
\jmath: \mathcal{C}(Y,\spin)\rightarrow \mathcal{C}(Y,\bar{\spin})
\end{equation*}
given by $\jmath(B,\Psi)=(\bar{B},\Psi\cdot j)$. Here $\bar{B}$ denotes the conjugate connection, and for the multiplication by $j$ we identify the spinor representation of $\mathbb{C}^2$ with $\mathbb{H}$. This identification squares to $-1$, so that we get an identification
\begin{equation*}
\jmath: \mathcal{B}(Y,\spin)\rightarrow \mathcal{B}(Y,\bar{\spin})
\end{equation*}
which squares to the identity (as $-1$ is a gauge transformation). Via this identifications, a regular perturbation $\q$ for $(Y,\spin)$ induces a regular perturbation $\jmath_*\q$ for $(Y,\bar{\spin})$, and we have induced identification of chain complexes
\begin{equation*}
\tilde{\jmath}_*:\Ctil(Y,p,\spin,\q,\hat{\q})\rightarrow \Ctil(Y,p,\bar{\spin},\jmath_*\q,\jmath_*\hat{\q}).
\end{equation*}
Suppose now that $\spin$ is self-conjugate. There is a quasi-isomorphisms
\begin{equation*}
\tilde{\Phi}:\Ctil(Y,p,\bar{\spin},\jmath_*\q,\jmath_*\hat{\q})\rightarrow \Ctil(Y,p,\spin,\q,\hat{\q}),
\end{equation*}
given by the continuation map. Such a map depends on a choice of a regular homotopy $k$ from $(\jmath_*\q,\jmath_*\hat{\q})$ and $(\q,\hat{\q})$. We can then define the chain map
\begin{equation*}
\tilde{\iota}:=\tilde{\Phi}\circ\tilde{\jmath}_*:\Ctil(Y,p,\spin,\q,\hat{\q})\rightarrow \Ctil(Y,p,\spin,\q,\hat{\q})
\end{equation*}
and define the involutive Floer chain complex $\widetilde{CI}(Y,p,\spin,\q,\hat{\q},k)$ to be the mapping cone
\begin{equation*}
\Ctil(Y,p,\spin,\q,\hat{\q})\stackrel{Q(1+\tilde{\iota})}{\longrightarrow}Q\cdot\Ctil(Y,p,\spin,\q,\hat{\q})
\end{equation*}
where $Q$ is a formal variable of degree $-1$. We denote its homology by $\HMItil(Y,p,\spin,\q,\hat{\q})$. This chain complex is naturally a graded module over $\ztwo[Q]/Q^2$, and it comes with a natural filtration, which we call the $Q$-filtration, inducing the exact triangle (\ref{triangle}).
\par
We can define the map on $\HMItil$ induced by a cobordism $(W,\gamma)$ equipped with a self-conjugate spin$^c$ structure $\spin$ from $(Y_0,\spin_0)$ to $(Y_1,\spin_1)$ as follows. Consider a regular perturbation $\mathfrak{p}$ on $W$, so that one obtains chain maps
\begin{equation*}
\tilde{m}(W,\spin,\gamma,\p):\Ctil(Y_0,p,\spin_0,\q_0,\hat{\q}_0,k_0)\rightarrow\Ctil(Y_1,p_1,\spin_1,\q_1,\hat{\q}_1,k_1)
\end{equation*}
inducing the cobordism map. Applying the involution, one obtains a chain map
\begin{equation*}
\tilde{m}(W,\gamma,\spin,\jmath_*\p):\Ctil(Y_0,p_0,\spin_0,\jmath_*\q_0,\jmath_*\hat{\q}_0,\jmath_*k_0)\rightarrow\Ctil(Y_1,p_1,\spin_1,\jmath_*\q_1,\jmath_*\hat{\q}_1,\jmath_*k_1)
\end{equation*}
computing (after canonical identifications) the same map in homology. Indeed, for suitable choice of data one can define a map $\tilde{h}$ such that
\begin{equation*}
\tilde{\partial}\circ\tilde{h}+\tilde{h}\circ\tilde{\partial}=\tilde{m}(W,\gamma,\spin,\p)\circ\tilde{\Phi}_0+\tilde{\Phi}_1\circ \tilde{m}(W,\gamma,\spin,\jmath_*\p)
\end{equation*}
there the $\tilde{\Phi}_i$ are the continuation maps used in the definitions of $\widetilde{CI}$ for $Y_i$. Again in this choice we need the perturbation on $S^3$ to be regular at each point, and this is again possible in light of Lemma \ref{sc} and the following remark. We can combine these in order to define the chain map
\begin{equation*}
\widetilde{mi}(W,\gamma,\p,h)=\left(\begin{matrix}
\tilde{m}(W,\gamma,\spin,\p) & 0\\
\tilde{h}\circ\tilde{\jmath}_* & \tilde{m}(W,\gamma,\spin,\p)
\end{matrix}\right):\widetilde{CI}(Y_0,p_0,\spin_0,\q_0,\hat{\q}_0,k_0)\rightarrow \widetilde{CI}(Y_1,p_1,\spin_1,\q_1,\hat{\q}_1,k_1).
\end{equation*}
We denote the map induced in homology by   $\HMIt(W,\gamma,\p,h)$. We state the relevant invariance result.
\begin{prop}
For any choice of two choices of perturbations and homotopies there exists a canonical isomorphism
\begin{equation*}
\widetilde{\Phi I}((\q,\hat{\q},k),(\q',\hat{\q}',k')):\HMItil_*(Y,p,\spin,\q,\hat{\q},k)\rightarrow \HMItil_*(Y,p,\spin,\q',\hat{\q}',k')
\end{equation*}
such that the following hold:
\begin{enumerate}
\item $\widetilde{\Phi I}((\q,\hat{\q},k),(\q,\hat{\q},k))$ is the identity of $\HMIt_*((\q,\hat{\q},k))$;
\item we have $\widetilde{\Phi I}((\q',\hat{\q}',k'),(\q'',\hat{\q}'',k''))\circ\widetilde{\Phi I}((\q,\hat{\q},k),(\q',\hat{\q}',k'))=\widetilde{\Phi I}((\q,\hat{\q},k),(\q'',\hat{\q}'',k''))$.
\end{enumerate}
In particular, there is a well defined involutive Floer homology group $\HMIt_*(Y,p,\spin)$.
\end{prop}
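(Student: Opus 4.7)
The plan is to build $\widetilde{\Phi I}$ as an explicit chain map between the two mapping cones and then verify its properties by a filtration argument combined with the simple-connectedness of the space of regular small perturbations on $S^3$. Recall that $\widetilde{CI}(Y,p,\spin,\q,\hat{\q},k)$ is the cone on $Q(1+\tilde{\iota}):\Ctil\to Q\cdot\Ctil$, so a chain map into the analogous complex built from $(\q',\hat{\q}',k')$ is determined by a pair consisting of a map on the underlying $\Ctil$'s and an off-diagonal chain homotopy. For the diagonal entries I take the continuation map $\tilde{\Phi}$ constructed earlier in the section. The off-diagonal entry is a map
\begin{equation*}
\tilde{L}:\Ctil(Y,p,\spin,\q,\hat{\q})\rightarrow Q\cdot\Ctil(Y,p,\spin,\q',\hat{\q}')
\end{equation*}
satisfying (over $\ztwo$)
\begin{equation*}
\tilde{\partial}'\circ\tilde{L}+\tilde{L}\circ\tilde{\partial}=Q\bigl(\tilde{\iota}'\circ\tilde{\Phi}+\tilde{\Phi}\circ\tilde{\iota}\bigr).
\end{equation*}

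To produce $\tilde{L}$ I would first choose a homotopy of homotopies interpolating between the regular homotopy obtained by concatenating $k$ with the path underlying $\tilde{\Phi}$ (followed by $\tilde{\jmath}_*$) and the one obtained by concatenating $\tilde{\jmath}_*$ and the path underlying $\tilde{\Phi}$ with $k'$. Such a homotopy exists because Lemma \ref{sc} and the remark following it assert that the space of small regular perturbations on $S^3$ is simply connected; moreover any two choices of such interpolations are themselves connected by a homotopy rel boundary. The map $\tilde{L}$ is then defined by a suitable count of parametrized moduli spaces over this two-parameter family, extending the construction of $\check{K}$ given earlier by one parameter. Assembling everything yields
\begin{equation*}
\widetilde{\Phi I}=\left(\begin{matrix}\tilde{\Phi}&0\\\tilde{L}&\tilde{\Phi}\end{matrix}\right),
\end{equation*}
and a direct computation using the identities above shows that this is a chain map of $\ztwo[Q]/Q^2$-modules.

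To see that $\widetilde{\Phi I}$ is a quasi-isomorphism, I would exploit the two-step $Q$-filtration on the cone. The induced map on the associated graded splits as two copies of the continuation map at the level of $\Ctil$, which is already known to induce a canonical isomorphism on $\HMtil$. A standard spectral sequence comparison then yields that $\widetilde{\Phi I}$ induces an isomorphism on $\HMItil$. Property (1) follows by taking constant paths and a constant homotopy of homotopies: the diagonal entry reduces to the identity of $\Ctil$ up to canonical chain homotopy, and $\tilde{L}$ becomes nullhomotopic. For property (2), concatenating the families produced for $(\q,\hat{\q},k)\to(\q',\hat{\q}',k')$ and $(\q',\hat{\q}',k')\to(\q'',\hat{\q}'',k'')$ yields a choice of data from the first to the third; the resulting composition is chain homotopic to any directly chosen $\widetilde{\Phi I}((\q,\hat{\q},k),(\q'',\hat{\q}'',k''))$ via a further interpolation between families, and the needed regularity of all small perturbations on $S^3$ throughout this interpolation is once again guaranteed by Lemma \ref{sc}.

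The main obstacle is the construction of $\tilde{L}$ and the verification that any two choices of the two-parameter family yield chain homotopic maps. Both steps require controlling moduli spaces of Seiberg-Witten trajectories on a punctured cobordism whose extra $S^3$ end carries a two- (respectively three-) parameter family of small perturbations. Once one knows the restriction to the $S^3$ end is regular at every point of the parameter space, the analytic package from Chapter $25$ of \cite{KM} extends with no essential modification, and no contribution from the $S^3$ end appears in codimension-one strata because of the degree gap between consecutive critical points there. Thus the entire argument ultimately rests on the topological input of Lemma \ref{sc}, which is exactly tailored to provide the connectedness and simple-connectedness needed to make each higher-dimensional family construction work.
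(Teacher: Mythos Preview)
Your proposal is correct and follows essentially the same approach as the paper: the paper's proof is a brief sketch stating that $\widetilde{\Phi I}$ arises as a continuation map whose definition requires a regular homotopy $k_t$ between $k$ and $k'$, with well-definedness and property~(1) coming from Lemma~\ref{sc} and property~(2) from a neck-stretching argument. You have simply made this explicit by writing out the $2\times2$ cone form, identifying the off-diagonal term $\tilde{L}$ with the count over the two-parameter family, and spelling out the $Q$-filtration argument for the quasi-isomorphism.
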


The proof of this results follows very closely those of the counterparts in usual monopole Floer homology (Chapter $23$ of \cite{KM}). Of course, the map $\widetilde{\Phi I}$ arises as a continuation map. Its definition requires the choice of a regular homotopy $k_t$ between $k$ and $k'$, which are themselves homotopies from $(\q,\hat{\q})$ to $(\jmath_*\q,\jmath_*\hat{\q})$ and from $(\q',\hat{\q}')$ to $(\jmath_*\q',\jmath_*\hat{\q}')$ respectively. On the other hand, any two such homotopies $k_t$ are homotopic relative to the boundary because of Lemma \ref{sc} and following remark, so that the map induced in homology can be shown to be independent of the choice as in the usual case. In particular, this also implies the property $(1)$ in the statement. Property $(2)$ (and more in general the composition property for cobordisms) follows from a standard neck stretching argument. 

\begin{example}In the case of $S^3$, we have that $\HMtil_*(S^3,p)$ is $\ztwo$, and indeed the generator can be identified (for some small regular perturbation, which we omit from the notation) in the chain complex
\begin{equation*}
\check{C}_*(S^3)\stackrel{\check{U}}{\longrightarrow} \check{C}_*(S^3)
\end{equation*}
with the bottom generator in the left summand. From this the identification of $\HMItil(S^3,p)$ with $\Br$ readily follows, as $\jmath_*$ induces the identity in isomorphism.
\end{example}

So far we have only discussed the case of a self-conjugate spin$^c$ structure, but one should include in the discussion also the non self-conjugate ones in order to achieve full functoriality: indeed, in general the composition of spin cobordisms does not need to be spin. In the case on a pair of distinct spin$^c$ structures $\spin\neq\bar{\spin}$ we can define $\check{C}I(Y,[\spin])$ to be
\begin{equation*}
\tilde{C}(Y,\spin,\q,\hat{\q})\oplus \tilde{C}(Y,\bar{\spin},\jmath_*\q,\jmath_*\hat{\q})\stackrel{1+\jmath_*}{\longrightarrow}Q\cdot(\tilde{C}(Y,\spin,\q,\hat{\q})\oplus \tilde{C}(Y,\bar{\spin},\jmath_*\q,\jmath_*\hat{\q}))
\end{equation*}
and denote its homology by and $\HMIt(Y,p,[\spin],[\q],[\hat{\q}])$. The invariance proof discussed above works without complications. On the other hand, as $\jmath$ acts freely on the complex exchanging the two summands, the homology is naturally identified as a $\ztwo[Q]/Q^2$-module with
\begin{equation*}
\HMtil(Y,p,\spin)\oplus\HMtil(Y,p,\spin)[-1]\equiv \HMtil(Y,p,\bar{\spin})\oplus \HMtil(Y,p,\bar{\spin})[-1],
\end{equation*}
where the action by $Q$ is trivial. The definition of the maps induced by a cobordism $(W,\gamma)$ equipped with a pair of conjugate non-isomorphic spin$^c$ structures $\spin\neq \bar{\spin}$ is straightforward at this point, and we spell it out in detail in the case in which the restriction to the incoming end is not self-conjugate while the restriction to the outgoing one is. In this case, for a fixed perturbation $\p,\bar{\p}$ for the spin$^c$ structures $\spin$ and $\bar{\spin}$ (and suitable chain homotopies for the $U$ actions), one can define the chain maps
\begin{align*}
\tilde{m}(W,\spin,\p)&:\tilde{C}(Y_0,\spin_0,\q_0,\hat{\q}_0)\rightarrow\tilde{C}(Y_1,\spin_1,\q_1,\hat{\q}_1)\\
\tilde{m}(W,\bar{\spin},\bar{\p})&:\tilde{C}(Y_0,\bar{\spin}_0,\jmath_*\q_0,\jmath_*\hat{\q}_0 )\rightarrow \tilde{C}(Y_1,\spin_1,\q_1,\hat{\q}_1)
\end{align*}
defining the usual map induced by the cobordisms. One can then define the chain map
\begin{equation*}
\check{m}i(W,\p,\bar{\p}):\widetilde{CI}(Y_0,[\spin_0],[\q_0],[\hat{\q}_0])\rightarrow\widetilde{CI}(Y_1,\spin_1,\q_1,\hat{\q}_1, k)
\end{equation*}
given by the matrix
\begin{equation*}
\left(\begin{matrix}
\tilde{m}(W,\spin,\p)+\tilde{m}(W,\bar{\spin},\bar{\p}) & 0\\
h & \tilde{\Phi}\circ(\tilde{m}(W,\bar{\spin},{\jmath}_*\p)+\tilde{m}(W,{\spin},\jmath_*{\bar{\p}}))
\end{matrix}\right).
\end{equation*}
where $h$ is a suitable additional term involving a certain chain homotopy for the $U$ action and the map $\tilde{\Phi}$. The induced map is independent of the choices, and we denote it by $\HMIt_*(W,[\spin])$.

\vspace{1cm}
\section{The link surgery spectral sequence}\label{surgery}
While the results (which are analogous to those in \cite{OSBr} and \cite{Blo}) in the present section hold for all the versions of involutive Floer homology, we focus on the case of $\HMItil$ as it will be important in the next section. Suppose that we are given a link $L$ with $l$ components $K_i$ in a based three-manifold $(Y,p)$ so that the basepoint $p$ does not belong to $L$. For each component $K_i$ fix a surgery triple $\gamma_{0}^i,\gamma_{1}^i,\gamma_{\infty}^i$, i.e. a collection of simple closed curves in $\partial(\mathrm{nbhd}(K_i))$ such that
\begin{equation*}
\gamma_{0}^i\cdot\gamma_{1}^i=\gamma_{1}^i\cdot\gamma_{\infty}^i=\gamma_{\infty}^i\cdot\gamma_{0}^i=-1.
\end{equation*}
For each $I\in\{0,1\}^l$, let $Y_I$ be the manifold obtained from $Y$ by surgery along $K_i$ according to the $i$th component of $I$, for each $i$. Also, we can define $w(I)$ to be the number of ones in $I$, and consider the lexicographical order on $\{0,1\}^k$. When $I\leq J$ and $w(J)-w(I)=1$, there is a natural $2$-handle attachment from $Y_I$ to $Y_J$ which we denote by $W_{IJ}$. The main result is then the following.

\begin{thm}\label{linkSS}
There exists a spectral sequence of $\Br$-modules whose $E^1$ page is
\begin{align*}
E^1&=\bigoplus_{I\in \{0,1\}^l} \HMItil_*(Y_I,p)\\
d^1&=\sum_{I\leq J,w(J)-w(I)=1} \HMItil_*(W_{IJ},[0,1]\times \{p\}).
\end{align*}
and converges to $\HMItil_*(Y_{\mathbf{\infty}},p)$, where $Y_{\mathbf{\infty}}$ is the manifold by surgery along all the $\gamma_{\infty}^i$. Furthermore, the spectral sequence is independent (up to canonical isomorphism) of the choice of data.
\end{thm}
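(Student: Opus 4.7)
The plan is to follow the hypercube strategy of Ozsv\'ath--Szab\'o and Bloom, adapted to the involutive setting. The starting point is the surgery exact triangle for $\HMItil$ itself: given a single surgery triple on a component $K_i$, the mapping cone description of $\widetilde{CI}$ together with the exact triangle for $\HMtil$ from \cite{Blo} and the five-lemma (applied to the long exact sequence relating $\HMtil$ and $\HMItil$) imply that the cobordism maps induced by the three $2$-handle attachments fit into an exact triangle of $\Br$-modules. Equivalently, for any two consecutive cobordism maps, there is a chain homotopy at the chain level between their composition and zero, constructed by counting solutions on the composite cobordism with a suitable parametrized perturbation.

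The heart of the argument is the construction, for each pair $I\leq J$ in $\{0,1\}^l$, of a chain map $\widetilde{mi}_{IJ}:\widetilde{CI}(Y_I,p)\to \widetilde{CI}(Y_J,p)$ of relative degree $w(J)-w(I)-1$ satisfying the hypercube master equation
\begin{equation*}
\sum_{I\leq K\leq J}\widetilde{mi}_{KJ}\circ\widetilde{mi}_{IK}=0.
\end{equation*}
For $w(J)-w(I)=1$ this is the cobordism map of Section \ref{involutive}; for larger jumps it is built by counting solutions on the composite handle-attachment cobordism equipped with a suitable family of metrics and perturbations parametrized by an associahedron-like polytope (the permutohedron of length $w(J)-w(I)$), exactly as in Chapter $7$ of \cite{Blo}. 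The key technical input, absent in the $\Pin$-setting but available here, is that consecutive $2$-handle attachment maps compose to a map chain homotopic to zero; this is what allows the iterative construction to close up. Throughout we must also track the $Q$-component $\tilde{h}\circ\tilde{\jmath}_*$ coming from the mapping cone definition of $\widetilde{CI}$, and the needed equivariance of the higher null-homotopies is supplied by Lemma \ref{sc} and the following remark, which ensure that the relevant paths of small regular perturbations on $S^3$ can be chosen coherently and are unique up to homotopy rel boundary.

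Given the hypercube, one forms the total complex
\begin{equation*}
X=\bigoplus_{I\in\{0,1\}^l}\widetilde{CI}(Y_I,p),\qquad D=\sum_{I\leq J}\widetilde{mi}_{IJ},
\end{equation*}
filtered by $w(I)$. The associated spectral sequence has $E^0$ the direct sum of the complexes $\widetilde{CI}(Y_I,p)$, so the $E^1$-page and the $d^1$ differential are as stated in the theorem. Convergence to $\HMItil_*(Y_{\mathbf{\infty}},p)$ is then proved by induction on $l$: for $l=1$ it is precisely the surgery triangle for $\HMItil$; for the inductive step, one contracts the hypercube along the direction of the last component, using the same triangle to identify the resulting two-term complex with $\widetilde{CI}$ of the manifold obtained by $\infty$-surgery on $K_l$. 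Invariance of the spectral sequence from $E^1$ onwards is a standard consequence: any two choices of auxiliary data (perturbations $\q,\hat{\q}$, homotopies $k$, and hypercube fillings) can be connected by a parametrized family, producing a filtered chain homotopy equivalence between the corresponding total complexes and hence a canonical isomorphism on all later pages.

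The main obstacle will be Step~2, namely the equivariant construction of the higher null-homotopies filling in the hypercube. The subtlety is exactly the one highlighted in the introduction: the composition of two consecutive handle-attachment maps must be not just null-homologous but explicitly chain homotopic to zero in a way that is compatible with the involution $\tilde{\iota}$ and with the perturbation data on the $S^3$ factor entering the definition of $\widetilde{CI}$. This is where the fact that $Q^2=0$ in $\Br$ is essential, and where the simple connectedness of the space of regular perturbations (Lemma \ref{sc}) plays its decisive role, exactly as in the discussion of $\widetilde{\Phi I}$ in Section \ref{involutive}.
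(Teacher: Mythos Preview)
Your overall architecture matches the paper's: a hypercube complex $X=\bigoplus_I\widetilde{CI}(Y_I)$ with maps $D^I_J$ built from permutohedron-parametrized families, filtered by weight, with convergence proved by induction on $l$ via the surgery triangle. The paper's inductive step is slightly more elaborate than your sketch---it constructs an extended acyclic complex $\tilde{X}^k$ over $\{0,1\}^{k-1}\times\{0,1,\infty\}\times\{\infty\}^{l-k}$ using graph-associahedra, where the new facets correspond to stretching along auxiliary $S^3$ and $S^2\times S^1$ hypersurfaces---but your ``contract along the last coordinate'' is the same idea.

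There is, however, a genuine gap in your derivation of the surgery triangle for $\HMItil$. Invoking Bloom's triangle for $\HMtil$ plus the five-lemma on the $Q$-long exact sequence is a homology-level argument; it neither produces the chain-level null-homotopies $\tilde{H}_i$ you need for the hypercube, nor does it establish that $\tilde{H}_{i+1}\circ\tilde{f}_i+\tilde{f}_{i+2}\circ\tilde{H}_i$ is a quasi-isomorphism (the hypothesis of the triangle-detection lemma). The paper proves both directly. First, a separate proposition shows that $\widetilde{mi}(W\hash\overline{\mathbb{C}P}^2)$ is explicitly null-homotopic: in the neck-stretching limit the $\overline{\mathbb{C}P}^2$ contributions come in conjugate pairs and cancel, and Lemma~\ref{sc} guarantees this cancellation persists along the homotopy of perturbations required by the involutive construction. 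Second, the pentagon family on the triple composite gives a chain homotopy from $\tilde{H}_{i+1}\circ\tilde{f}_i+\tilde{f}_{i+2}\circ\tilde{H}_i$ to a map $\tilde{L}$ defined by stretching along the $S^2\times S^1$ hypersurface $R$; the paper then takes the two-step $Q$-filtration on $\widetilde{CI}$, observes that on the associated graded $\tilde{L}$ reduces to the non-involutive map already known to be a quasi-isomorphism, and concludes. This $Q$-filtration reduction to the usual theory is the mechanism that replaces your five-lemma shortcut, and you should expect to use it again when showing $\tilde{X}^k$ is acyclic.
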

\begin{remark}
Notice that $p$ determines a point in each of the $Y_I$. For the maps in the $d^1$ differential, the path we consider is the product path $[0,1]\times \{p\}$, as we can identify the cobordism $W_{IJ}$ as the product cobordism $[0,1]\times Y_I$ with a two-handle attached far away from the knot.
\end{remark}

The following result is one of the key reasons why we are working in the involutive setting rather than the $\Pin$-one.
\begin{prop}
For any cobordism $W$, the map $\HMItil_*(W\hash \overline{\mathbb{C}P}^2)$ is zero. Indeed, there is an explicit chain homotopy to zero.
\end{prop}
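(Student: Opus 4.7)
The plan is to reduce to a local cobordism computation and then exploit the complex-conjugation symmetry of $\overline{\mathbb{C}P}^2$ to write down the null-homotopy explicitly.

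By a neck-stretching argument along the connected-sum neck (as in Chapter $26$ of \cite{KM}) together with the composition property of cobordism maps in $\HMItil$, the map $\HMItil_*(W\hash\overline{\mathbb{C}P}^2,\gamma)$ factors chain-homotopically through the endomorphism of $\widetilde{CI}(S^3,p)$ induced by $X:=\overline{\mathbb{C}P}^2\setminus(D^4\amalg D^4)$, viewed as a cobordism from $S^3$ to $S^3$ with arc $[0,1]\times\{p\}$, attached along the connected-sum $S^3$. It therefore suffices to produce an explicit chain-null-homotopy of $\widetilde{mi}(X)$ on $\widetilde{CI}(S^3,p)$.

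Next, $H^2(X;\mathbb{Z})\cong\mathbb{Z}$ is generated by a class $\bar h$, and the spin\textsuperscript{c} structures $\mathfrak{t}_k$ on $X$ have $c_1(\mathfrak{t}_k)=(2k+1)\bar h$. Since $2k+1$ is nonzero, $\mathfrak{t}_k\neq\bar{\mathfrak{t}}_k=\mathfrak{t}_{-k-1}$ for every $k$, so $X$ admits no self-conjugate spin\textsuperscript{c} structure, and $\widetilde{mi}(X)$ decomposes as a sum over conjugate pairs $[\mathfrak{t}_k]=\{\mathfrak{t}_k,\mathfrak{t}_{-k-1}\}$ of contributions $\widetilde{mi}(X,[\mathfrak{t}_k])$. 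It is enough to null-homotope each such contribution.

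To do this, I would invoke the orientation-preserving involution $\sigma:\overline{\mathbb{C}P}^2\to\overline{\mathbb{C}P}^2$ induced by complex conjugation $(z_0:z_1:z_2)\mapsto(\bar z_0:\bar z_1:\bar z_2)$, which is orientation-preserving (Jacobian $(-1)\cdot(-1)=1$ on an affine chart) and satisfies $\sigma^*\mathfrak{t}_k=\bar{\mathfrak{t}}_k$. Choosing the two excised disks around points of the fixed locus $\mathbb{R}P^2$ and the arc inside $\mathbb{R}P^2$, we arrange that $\sigma|_{\partial X}$ is the identity and $\sigma$ fixes the arc. Since $\sigma$ acts freely on $\mathrm{Spin}^c(X)$, an equivariant transversality argument produces a $\sigma$-invariant regular perturbation $\p$ on $X$ together with $\sigma$-invariant versions of all auxiliary data (the paths of perturbations on the $S^3$ ends entering the $U$-chain-homotopy $\tilde h$, and the regular homotopy $k$ defining $\tilde\iota$). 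With these choices, $\sigma$ gives sign-preserving bijections between $\mathcal{M}(X,\mathfrak{t}_k,\p)$ and $\mathcal{M}(X,\bar{\mathfrak{t}}_k,\p)$ and between the corresponding parametrized moduli spaces defining the off-diagonal entry of $\widetilde{mi}(X,[\mathfrak{t}_k])$. Working in characteristic two, every entry of the matrix representing $\widetilde{mi}(X,[\mathfrak{t}_k])$ vanishes identically at the chain level. For a general (non-$\sigma$-invariant) choice of data, $\widetilde{mi}(X,[\mathfrak{t}_k])$ differs from this $\sigma$-invariant zero map by a continuation-type chain homotopy, which furnishes the explicit null-homotopy demanded by the statement.

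The main difficulty is the simultaneous equivariant transversality: one must arrange the perturbation on $X$, the paths of perturbations defining the chain homotopy $\tilde h$, and the homotopy $k$ defining $\tilde\iota$ all to be $\sigma$-invariant while remaining regular. The required analytic input is a $\sigma$-equivariant analogue of Lemma \ref{sc}; because $\sigma|_{\partial X}$ is the identity, the space of $\sigma$-invariant small regular perturbations on each $S^3$ end coincides with the non-equivariant one, which is connected and simply connected by the lemma, and it is precisely this simple-connectedness that enables the continuation step identifying the explicit null-homotopy.
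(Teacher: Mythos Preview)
Your argument has a concrete error: the restriction $\sigma|_{\partial X}$ is \emph{not} the identity. Around a fixed point of the conjugation involution, in affine coordinates $(z_1,z_2)$, the map is $(z_1,z_2)\mapsto(\bar z_1,\bar z_2)$; restricted to a small boundary $S^3$ this is the involution with fixed circle $\{(x_1,x_2):x_i\in\mathbb{R}\}$, certainly not the identity. Consequently the claim that ``the space of $\sigma$-invariant small regular perturbations on each $S^3$ end coincides with the non-equivariant one'' is false, and the entire equivariant-transversality scheme (arranging $\p$, the $U$-homotopy data, and $k$ to be simultaneously $\sigma$-invariant while fixing the ends) does not go through as written. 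One could try to repair this by isotoping $\sigma$ to the identity on a collar of $\partial X$, but then you must separately establish regularity within the smaller space of $\sigma$-invariant perturbations and check that the induced action on critical points is trivial --- none of which follows from Lemma~\ref{sc}. The factorization in your first paragraph is also not quite right: stretching the connect-sum neck separates off $\overline{\mathbb{C}P}^2\setminus D^4$ with a \emph{single} $S^3$ end, not a cobordism $S^3\to S^3$; an additional puncture has to be justified (e.g.\ via the relative invariant of $D^4$) before your $X$ enters the picture.

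The paper's argument avoids the diffeomorphism $\sigma$ entirely. It runs the neck-stretching on $W(S)=(W^*\setminus D^4)\cup[0,S]\times S^3\cup(\overline{\mathbb{C}P}^2\setminus D^4)$ directly, and at $S=\infty$ uses the \emph{conjugation on configurations} $\jmath$: the spin$^c$ structures on $\overline{\mathbb{C}P}^2$ come in conjugate pairs, and for each such pair the zero-dimensional moduli spaces on $(\overline{\mathbb{C}P}^2\setminus D^4)^*$ consist of a single point (Lemma~27.4.2 of \cite{KM}), so the broken solutions cancel in pairs $(\gamma_W,\gamma_{\overline{\mathbb{C}P}^2})$ and $(\gamma_W,\bar\gamma_{\overline{\mathbb{C}P}^2})$. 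The parametrized moduli spaces over $S\in[0,\infty)$ then furnish the null-homotopy. In the involutive setting one only needs, in addition, a homotopy of small regular perturbations from $\hat\q$ to $\jmath_*\hat\q$ on the $S^3$ neck, and simple connectivity of that space (Lemma~\ref{sc}) is exactly what makes this work. No geometric involution of $\overline{\mathbb{C}P}^2$ is needed, because $\jmath$ already provides the pairing at the level of Seiberg--Witten solutions.
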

The analogous result in the $\Pin$-case states that the maps is generally non-zero, with image contained in the image of $Q^2$ (see \cite{Lin2}). We expected our result to hold as $Q^2$ is zero in the involutive setting. 
\begin{proof}
The proof follows as in the usual case \cite{KMOS} by a neck stretching argument. Fix the round metric on $S^3$ and a small regular perturbation $\hat{\q}$ on $S^3$. Consider for $S\in[0,\infty)$ the manifold
\begin{equation*}
W(S)=(W^*\setminus D^4)\cup [0,S]\times S^3\cup (\overline{\mathbb{C}P}^2\setminus D^4),
\end{equation*}
equipped with a metric which is independent of $S$ on the first and third term and is a product one on the middle term. We can look at the moduli spaces of solutions on this cobordism parametrized by $S$. In order to compactify it, we need to also consider the pairs of solutions $(\gamma_W,\gamma_{\overline{\mathbb{C}P}^2})$ on the disconnected cobordism
\begin{equation*}
W(\infty)= (W\setminus D^4)^*\amalg  (\overline{\mathbb{C}P}^2\setminus D^4)^*
\end{equation*}
that are asymptotic on the puncture to the same critical point. When the moduli space is zero dimensional, the solutions come in pairs
\begin{equation*}
(\gamma_W,\gamma_{\overline{\mathbb{C}P}^2})\quad \text{and}\quad (\gamma_W,\bar{\gamma}_{\overline{\mathbb{C}P}^2}).
\end{equation*}
This follows from the fact that the spin$^c$ structures on $\overline{\mathbb{C}P}^2$ come in conjugate pairs and when the moduli space on $(\overline{\mathbb{C}P}^2\setminus D^4)^*$ is zero dimensional, it consists of a single point (for a given choice of perturbations, see Lemma $27.4.2$ in \cite{KM}). In particular, their total contribution is zero, so that considering zero dimensional parametrized moduli spaces on $W(S)$ one obtains a chain homotopy from the original map induced by $W$ and zero.
\par
The same argument works with little modification in the involutive case. We can first pick a homotopy through small regular perturbations rom $\hat{\q}$ to $\jmath_*\hat{\q}$. We can then run the neck stretching argument to define a homotopy between $\widetilde{mi}(W)$ and the map induced by looking at the cobordism in which the neck is stretched to infinity. While the perturbation on the puncture changes along the path of homotopies, at each time the zero dimensional moduli spaces on $\overline{\mathbb{C}P}^2$ corresponding to conjugate spin$^c$ structures provide two canceling solutions as above, so the total contribution is zero.
\end{proof}

The key point in the proof is that in order to show that the map vanishes we only need the space of small regular perturbations to be simply connected. We can interpret the fact that the analogous result in the $\Pin$-theory is false from this topological perspective: in order to define the $\Pin$-theory, one would need higher homotopies (see \cite{SS}), and the non-vanishing of the map corresponds to the non-triviality of the higher homotopy groups of the space of regular perturbations.
\begin{prop}
Given a knot $K\subset Y$ and a surgery triple $\gamma_i$ for $i=1,2,3$, denote by $Y_i$ the manifold obtained by surgery on $K$ with framing $\gamma_i$. Then there is an exact triangle
\begin{center}
\begin{tikzpicture}
\matrix (m) [matrix of math nodes,row sep=1em,column sep=1em,minimum width=2em]
  {
  \HMItil_{*}(Y_1,p) && \HMItil_{*}(Y_2,p)\\
  &\HMItil_{*}(Y_3,p) &\\};
  \path[-stealth]
  (m-1-1) edge node [above]{} (m-1-3)
  (m-2-2) edge node [left]{} (m-1-1)
  (m-1-3) edge node [right]{} (m-2-2)  
  ;
\end{tikzpicture}
\end{center}
where in the latter we chose the basepoint $p\not\in K$, and the maps are the maps induced by the corresponding $2$-handle attachment cobordisms.
\end{prop}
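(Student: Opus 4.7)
The plan is to adapt the standard proof of the surgery exact triangle from [KMOS] (and [Blo] for the tilde version) to the involutive setting. The strategy splits into two parts: first, to establish that the composition of two consecutive maps in the triangle is chain nullhomotopic via an explicit chain homotopy, and second, to show that the resulting chain map from the mapping cone of one map to the third vertex is a quasi-isomorphism.

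For the first part, one considers the composition cobordism $W_{i,i+1} \cup W_{i+1,i+2}$ (indices taken cyclically). The surgery triple condition $\gamma_a \cdot \gamma_{a+1} = -1$ implies that this cobordism admits an embedded sphere of self-intersection $-1$, so it decomposes as a connected sum $W' \# \overline{\mathbb{C}P}^2$ for some simpler cobordism $W'$. By the preceding proposition, $\widetilde{mi}(W' \# \overline{\mathbb{C}P}^2)$ is explicitly chain nullhomotopic, and this is the key new input supplying the desired chain nullhomotopy of the composition $\widetilde{mi}_{i+1,i+2} \circ \widetilde{mi}_{i,i+1}$. Concretely, one builds the chain nullhomotopy as a $2 \times 2$ block matrix whose diagonal entries are supplied by the analogous nullhomotopies from [Blo] (for the usual tilde theory) and whose off-diagonal entry comes from the neck-stretching argument of the preceding proposition, running simultaneously a path of perturbations on the $S^3$ neck and tracking the conjugate pairing of spin$^c$ structures on $(\overline{\mathbb{C}P}^2 \setminus D^4)^*$. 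The simply-connectedness of the space of small regular perturbations on $S^3$ from Lemma \ref{sc} is what permits these choices.

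For the second part, the chain nullhomotopy produces a chain map from $\mathrm{Cone}(\widetilde{mi}_{i,i+1})$ to $\widetilde{CI}(Y_{i+2})$ in the standard way, and we need to show it is a quasi-isomorphism. I would argue by the $Q$-filtration on $\widetilde{CI}$, whose associated graded is $\tilde{C} \oplus \tilde{C}[-1]$. This induces a filtration on $\mathrm{Cone}(\widetilde{mi}_{i,i+1})$ whose associated graded is two copies of $\mathrm{Cone}(\tilde{m}_{i,i+1})$, and the induced map on the associated graded is identified with the quasi-isomorphism $\mathrm{Cone}(\tilde{m}_{i,i+1}) \simeq \tilde{C}(Y_{i+2})$ established in [Blo]. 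The five lemma applied to the resulting long exact sequences in homology then yields the result.

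The main obstacle lies in the first part: constructing the off-diagonal entry of the chain nullhomotopy, which must interpolate between the nullhomotopy on $\tilde{C}$ coming from [Blo] and the involution built into $\widetilde{CI}$. This requires an extended neck-stretching argument on the composition cobordism, simultaneously varying the path of perturbations on $S^3$ and tracking parametrized moduli spaces on $(\overline{\mathbb{C}P}^2 \setminus D^4)^*$ where solutions come in conjugate pairs that cancel as in the proof of the preceding proposition. The formal framework is essentially that of the construction of the cobordism map $\widetilde{mi}$ itself, but with one more level of chain homotopies; Lemma \ref{sc} again ensures the relevant homotopies exist and are unique up to homotopy.
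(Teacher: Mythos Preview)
Your proposal is correct and, for the first part, matches the paper exactly: the composite cobordism is identified with $\overline{W}_{i+2}\#\overline{\mathbb{C}P}^2$, and the preceding proposition supplies an explicit chain null-homotopy $\tilde{H}_i$ of $\tilde{f}_{i+1}\circ\tilde{f}_i$, built (as you say) from the stretching along the two disjoint hypersurfaces $Y_{i+1}$ and $S_i$.

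For the second part you take a more direct route than the paper. The paper follows the [KMOS] template literally: it considers the triple composite, builds the pentagon family $\bar{P}$ of metrics \emph{in the involutive setting} (this requires checking, via the same argument as Lemma~\ref{sc}, that the space of regular perturbations on the $S^2\times S^1$ hypersurface $R$ is simply connected), and uses this to chain-homotope the triangle-detection map $\tilde{H}_{i+1}\tilde{f}_i+\tilde{f}_{i+2}\tilde{H}_i$ to a map $\tilde{L}$; only then is the $Q$-filtration invoked, on $\tilde{L}$, to reduce to the usual tilde result. Your shortcut applies the $Q$-filtration one step earlier, directly to the map $\mathrm{Cone}(\tilde{f}_i)\to\widetilde{CI}(Y_{i+2})$, and reduces immediately to the quasi-isomorphism from [Blo] as a black box. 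This is valid and more economical for the present proposition. The paper's longer route has the side benefit of establishing the $S^2\times S^1$ simple-connectedness explicitly, which is reused in the proof of Theorem~\ref{linkSS}, where the graph-associahedra involve $S^2\times S^1$ hypersurfaces and the involutive constructions must be carried out directly. One small point worth making explicit in your write-up: your five-lemma argument uses that the diagonal blocks of your $\tilde{H}_i$ are the [Blo] null-homotopies on the nose, so that the map on associated graded really is the [Blo] map; this holds by construction but should be said.
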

\begin{proof}
The result follows in the same manner as the triangle in monopole Floer homology, see \cite{KMOS}. We spell out the few additional details to be checked. Denote by $W_i$ the $2$-handle attachment from $Y_i$ to $Y_{i+1}$, and consider the induced map
\begin{equation*}
\tilde{f}_i: \widetilde{CI}_*(Y_i)\rightarrow \widetilde{CI}_*(Y_{i+1}).
\end{equation*}
We will supress the various basepoints and additional choices for simplicity. The composite cobordism $W_{i+1}\circ W_i$ is naturally identified with $\overline{W}_{i+2}\hash \overline{\mathbb{C}P}^2$ and the previous discussion constructs chain null-homotopies $\tilde{H}_i$ for $\tilde{f}_{i+1}\circ\tilde{f}_i$ (this is associated to the stretching along the two hypersurfaces $Y_{i+1}$ and $S_i$, see Figure \ref{triplecomp}). The result then follows by general homological algebra (see the triangle detection lemma $5.1$ in \cite{KMOS}) once we show that
\begin{equation}\label{goal}
\tilde{H}_{i+1}\circ \tilde{f}_i+\tilde{f}_{i+2}\circ\tilde{H}_i: \widetilde{CI}_*(Y_i)\rightarrow \widetilde{CI}(Y_i)
\end{equation}
is a quasi-isomorphism. To see why this holds, one considers the triple composite cobordism $W_{i+2}\circ W_{i+1}\circ W_i$. This contains five distinguished hypersurfaces (see Figure \ref{triplecomp}), and one considers a two-dimensional family of (possibly degenerate) metrics and perturbations parametrized by the pentagon $\bar{P}$ in Figure \ref{pentagon}. The latter is obtained as the union of five squares, each of which is a two dimensional family of metrics and perturbations corresponding to disjoint hypersurfaces. The edges $Q(S_i)$ correspond to stretching out to infinity a $\overline{\mathbb{C}P}^2$ summand along the hypersurface $S_i$, so that when considering zero dimensional moduli space the corresponding edges contribute zero as in the previous proposition. Again, we only need here that the space of small regular perturbations is simply connected. Hence counting zero dimensional moduli spaces parametrized by this pentagon provides a chain homotopy between (\ref{goal}) (which corresponds to the hypersurfaces $Y_{i+1}$ and $Y_{i+2}$ stretched out to infinity, hence the edges $Q(Y_{i+1})$ and $Q(Y_{i+2})$) and a map $\tilde{L}$ obtained by counting certain solutions on the cobordism with the hypersurface $R$ stretched to infinity. The latter is a copy of $S^2\times S^1$ with a metric of positive scalar curvature, and it is shown in \cite{KMOS} that in the usual setting the induced map is a multiplication by a power series in $U$ with leading coefficient $1$ (hence an isomorphism). To show that the same statement holds in the involutive setting, we first need to show that for the space regular of perturbations of $S^2\times S^1$ we consider is simply connected, so that all the constructions can be carried over. The perturbations used in \cite{KMOS} have the following form. First, one can fix a $\jmath$-equivariant retraction
\begin{equation*}
\mathcal{B}(S^2\times S^1, \spin_0)\rightarrow \mathbb{T}
\end{equation*}
where the latter is the circle of flat connections (on which $\jmath$ acts by conjugation). Consider a $\jmath$-invariant Morse function
\begin{equation*}
f:\mathbb{T}\rightarrow \mathbb{R}
\end{equation*}
which only has two critical points (e.g. the standard height function). We can then achieve transversality by considering perturbations of the form
\begin{equation}\label{pertS2S1}
\varepsilon f\circ p +\hat{\q}
\end{equation}
where $\varepsilon$ is fixed small so that no irreducible critical points are introduced and $\hat{\q}$ is small compared to epsilon. There will be only two critical points for the perturbed Chern-Simons-Dirac functional, namely the reducibles corresponding to the two spin connections on $S^2\times S^1$, and transversality is equivalent to the corresponding Dirac operators having simple spectrum. Now under the involution this pushes forward to $\varepsilon f\circ p +\jmath_*\hat\q$, hence the situation is the same as that of $S^3$ and simply connectivity follows from Lemma \ref{sc} and the following remark.
\par
This shows in the involutive setting that the pentagon of metrics $\bar{P}$ together with a suitable regular homotopy to its conjugate induces a chain homotopy from (\ref{goal}) to
\begin{equation*}
\widetilde{L}:\widetilde{CI}_*(Y_i)\rightarrow \widetilde{CI}(Y_i).
\end{equation*}
The latter is defined counting contributions from $R$ with the same formulas as \cite{KMOS}. Taking the $Q$-filtration, we see that the induced map of the $E^1$ page is an isomorphism because of the result in the usual case, and the proposition follows.
\end{proof}

\begin{figure}
  \centering
\def\svgwidth{0.7\textwidth}
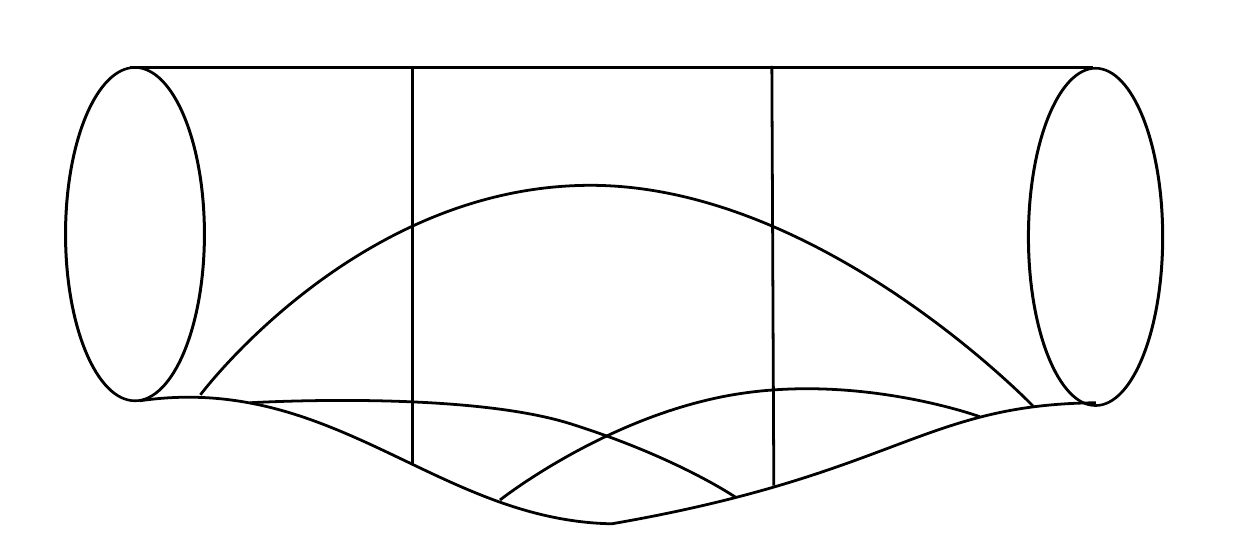
    \caption{The five hypersurfaces in the triple composite.}\label{triplecomp}
\end{figure}

\begin{figure}
  \centering
\def\svgwidth{0.5\textwidth}
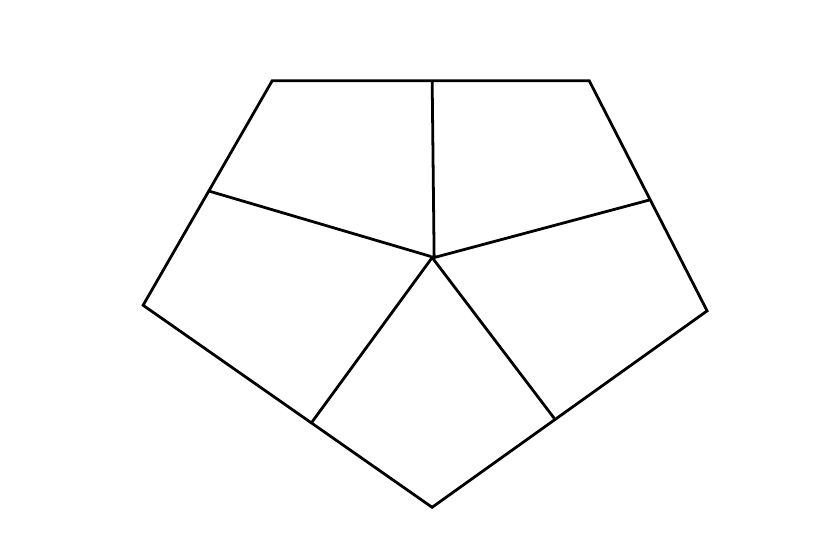
    \caption{The family of metrics $\bar{P}$ Each square correspond to stretching along two disjoint surfaces.}\label{pentagon}
\end{figure}

We are finally ready to prove Theorem \ref{linkSS}. The proof is essentially the same as that provided in \cite{Blo} (to which we refer for the details), and we discuss discuss which adaptations are needed for the involutive case. Define $X$ to be the chain complex
\begin{equation*}
X= \bigoplus_{I\in \{0,1\}^l} \widetilde{CI}_*(Y_I)
\end{equation*}
where the differential $D$ is obtained by summing over a collection of maps
\begin{equation*}
D^I_J:\widetilde{CI}_*(Y_I)\rightarrow \widetilde{CI}_*(Y_J)
\end{equation*}
for $I\leq J$. To each $I\leq J$, there is an associated $(w(J)-w(I)-1)$-dimensional permutohedron $P_{IJ}$, and the maps are defined by considering the moduli spaces parametrized by a corresponding family of metrics and perturbations, together with a regular homotopy to its image under the pushforward along $\jmath_*$. These families correspond to stretching along the collection of $2^{(w(J)-w(I)-1)}-2$ hypersurfaces $Y_K$ for $I<K<J$. For example, map $D^I_I$ is simply the differential of $\widetilde{CI}(Y_I)$, while for $w(J)-w(I)=1$ this is just the map induced by the corresponding $2$-handle attachment. The proof of \cite{Blo} carries over to show that this is indeed a chain complex. The filtration by $w$ induces a spectral sequence whose $E^1$ page is readily identified as in the statement.
\par
We then need to prove that the homology of $X$ is $\HMItil_*(Y_{\infty})$. We proceed by induction on the number of components of the link as in \cite{Blo}. We slightly generalize the notation $Y_I$ to denote surgeries with coefficients in $I\in\{0,1,\infty\}^k$. Consider the component $K_k$. We have that the construction provided in \cite{Blo} of the chain complex
\begin{equation*}
\tilde{X}^k=\bigoplus_{I\in \{0,1\}^{k-1}\times\{0,1,\infty\}\times \{\infty\}^{l-}k} \widetilde{CI}_*(Y_I)
\end{equation*}
associated to the lattice $\{0,1\}^{l-1}\times\{0,1,\infty\}\times\{\infty\}^k$ (where in the $k$th component $0<1<\infty$) carries over to the involutive case in the same fashion as the construction of $X$. Indeed, his construction involves families of metrics and perturbations parametrized by certain graph-associahedra corresponding to stretching along hypersurfaces which are copies of either $Y_K$ for $I<K<J$, $S^3$ or $S^2\times S^1$. For the latter two, we require as before that the homotopy to the conjugate is through regular perturbations.  The proof of the surgery exact triangle for $K_k$ can be rephrased to say that the complex $\tilde{X}^k$ is acyclic. If we consider the $2$-step filtration on $X^l$ obtained by considering the inclusion $\{\infty\}\subset\{0,1,\infty\}$ in the $k$th coordinate, the fact that the chain complex is acyclic tells us that there is a quasi-isomorphism
\begin{equation*}
H_*\left(\bigoplus_{I\in \{0,1\}^{k-1}\times\{\infty\}^{l-k+1}} \widetilde{CI}_*(Y_I)\right)\cong H_*(X),
\end{equation*}
and the left-hand side is quasi-isomorphic to $\HMItil_*(Y_{\infty})$ by inductive hypothesis.

\vspace{1cm}
\section{Relation with Bar-Natan's theory}\label{barnatan}

In this section we discuss how Bar-Natan's theory, which is the modification of Khovanov's construction obtained via the operations (\ref{BN}), naturally arises in our context after setting $u^2$ to be zero. Recall that we will deal with the reduced version $\BN^2$: after choosing a basepoint, this is the homology of the complex obtained by quotienting the $\Br$-subcomplex consisting of elements which have $\vm$ in the entry corresponding to the basepoint. The same change of basis adopted in \cite{ORS} shows that this is independent of the choice of the basepoint, and furthermore we have the splitting of bigraded $\Br$-modules
\begin{equation*}
\mathrm{BN}^2_{*,*}(L)=\BN^2_{*,*}(L)\oplus \BN^2_{*-2,*}(L)
\end{equation*}
Consider now a diagram $D$ of $L$ equipped with a basepoint. The key observation (see also \cite{OSBr}) is that the resolution of crossings
\begin{equation*}
\Skein{0}\stackrel{0}{\longleftarrow}\Skein{crossing}\stackrel{1}{\longrightarrow}\Skein{1}
\end{equation*}
can be realized, when thinking of the branched double cover, as a surgery triple. Here the knot along which the surgery is performed is given by the inverse image of an unknotted arc connecting the two strands in the diagram. We can then study the associated surgery spectral sequence. The main result of the present paper follows from the following.
\begin{thm}\label{main}
Suppose a diagram $D$ of a link $L$ in $S^3$ is given. Then the $E^2$-page of the link surgery spectral sequence associated to the cube of resolutions in the branched double cover is isomophic to $\BN^2(L)$.
\end{thm}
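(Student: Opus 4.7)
The plan is to apply the link surgery spectral sequence of Theorem \ref{linkSS} to the cube of resolutions of $D$ inside $\Sigma(L)$, using the observation (from the paragraph preceding the theorem) that each crossing's two resolutions together with the crossing itself realize a surgery triple in the branched double cover. My goal is to identify the underlying bigraded vector space of the $E^1$-page with the underlying module of the reduced Bar-Natan chain complex, and to match the $d^1$-differential with the TQFT operations appearing in (\ref{BN}).

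For each vertex $I\in\{0,1\}^n$ of the cube, the resolution $D_I$ is an unlink with $k_I$ components, so $\Sigma(D_I)\cong \#^{k_I-1}(S^2\times S^1)$ (or $S^3$ when $k_I=1$). After fixing a spin structure on each such vertex, the identification (\ref{ident}) gives
\begin{equation*}
\HMItil_*(\Sigma(D_I),p)\cong \Lambda^*\bigl(\ztwo\langle\gamma_1,\ldots,\gamma_{k_I-1}\rangle\bigr)\otimes \Br.
\end{equation*}
I would then label the non-basepoint circles of $D_I$ by $1,\ldots,k_I-1$ and send $1\leftrightarrow \vp$, $\gamma_i\leftrightarrow\vm$ on the $i$-th circle, and $Q\leftrightarrow u$. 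Summing over $I$ and quotienting, as in \cite{ORS}, by the subcomplex of elements carrying $\gamma$ on the basepoint circle identifies the underlying module of the $E^1$-page with the (reduced) Bar-Natan module.

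Next, for the $d^1$-differential, each edge of the cube carries a $2$-handle attachment whose total space is the branched double cover of a saddle cobordism in $[0,1]\times S^3$; the saddle either merges two circles of $D_I$ or splits one into two. I would compute the induced $\HMItil$-map in each of these two local models. The non-involutive contribution reproduces the Khovanov multiplication and comultiplication, exactly as in \cite{Blo}. The new involutive contributions are $Q$-valued terms coming from the fact, recalled in the introduction, that $\HMItil(S^2\times S^2\setminus(D^4\amalg D^4))$ acts by multiplication by $Q$ on $\HMItil(S^3)=\Br$; under the dictionary of the previous step these are precisely the $u\vm$ appearing in $m(\vm\otimes\vm)$ and the $u\vp\otimes\vp$ appearing in $\Delta(\vp)$ in (\ref{BN}). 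Independence of basepoint and of the spin-structure choices should follow from the usual change-of-basis argument of \cite{ORS} together with the automorphism $\gamma\mapsto \gamma+Q$ of $\HMItil(S^2\times S^1)$ discussed earlier.

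The main obstacle I foresee is the third step: namely, verifying that the involutive corrections to the merge and split maps really assemble into the precise $u$-coefficients prescribed by (\ref{BN}), and in a way compatible with the stabilization by copies of $S^2\times S^1$ corresponding to the uninvolved circles of $D_I$. This requires tracking the paths of regular perturbations and the chain homotopies entering the definition of $\HMItil$ across all vertices of the cube coherently (via the permutohedra that already appeared in the proof of Theorem \ref{linkSS}), rather than merely edge-by-edge. Once this coherent identification is established, the identification of $E^2$ with $\BN^2(L)$ is formal.
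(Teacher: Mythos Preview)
Your overall strategy matches the paper's: identify the $E^1$-groups via Lemma \ref{E1}, then compute the edge maps and match them with the Bar-Natan \textsc{tqft}. But two substantive points in your sketch differ from what the paper actually does, and your stated ``main obstacle'' is not where the real work lies.

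First, you write ``after fixing a spin structure on each vertex'' as if this were a minor bookkeeping step. In the paper this is the crux: the identification (\ref{ident}) is \emph{not} canonical, and making an arbitrary choice at each vertex will produce edge maps that differ from the Bar-Natan operations by the automorphisms $\gamma\mapsto\gamma+Q$. The paper resolves this by invoking Turaev's bijection between spin structures on $\Sigma(L)$ and quasi-orientations of $L$, together with a checkerboard colouring convention that orients all resolution circles coherently. This guarantees that each saddle cobordism carries a spin structure restricting to the chosen ones on both ends, so that Lemma \ref{maps} applies in a consistent basis.

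Second, your description of the $u$-terms as ``involutive corrections'' to the cobordism maps is not quite how they arise. With the coherent spin choices above, Lemma \ref{maps} gives the split map as $\xi\mapsto[\gamma']\wedge\xi$ with \emph{no} $Q$-term, where $[\gamma']$ is the class of the arc joining the two new circles. The $u\,\vp\otimes\vp$ term in $\Delta(\vp)$ appears only after the change of basis from $([\gamma_1],[\gamma'])$ to the ``standard'' basis $([\gamma_1],[\gamma_1'])$ coming from arcs to the basepoint circle; this change of basis sends $[\gamma']\mapsto[\gamma_1]+[\gamma_1']+Q$. The $S^2\times S^2$ computation (Lemma \ref{S2S2}) enters in the proof of Lemma \ref{maps} to pin down the merge map, which does carry a $Q$-term directly.

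Finally, the obstacle you foresee---tracking chain homotopies coherently across the cube via permutohedra---is not needed here. The permutohedra already went into constructing the spectral sequence (Theorem \ref{linkSS}); identifying $E^2$ only requires computing $d^1$, which is a purely edge-by-edge calculation once the spin structures are fixed coherently.
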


The first step is to identify the groups appearing in the $E^1$-page of the spectral sequence. These correspond to the branched double covers of the full resolutions. These are just a collection of cycles in the plane, so their branched double cover is just a connected sum $\hash^k S^2\times S^1$, where $k+1$ is the number of cycles.

\begin{lemma}\label{E1}
Let $Y=\hash^k S^2\times S^1$. We have the isomorphism of graded $\Br$-modules
\begin{equation*}
\HMItil_*(Y)=\Lambda^*(H_1(Y,\ztwo))\otimes \Br
\end{equation*}
where the $H_1(Y,\ztwo)$ lies in degree $-1$.
\end{lemma}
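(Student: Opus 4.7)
The plan is to combine the known computation of $\HMtil_*(Y)$ with a direct analysis of the conjugation involution $\jmath_*$. The key step is to show that $\jmath_*$ acts as the identity on $\HMtil_*(Y)$; once this is established, the exact triangle from Section \ref{involutive} splits and yields the claimed formula.

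First, I recall the classical computation (see \cite{KM}, \cite{Blo}): on the unique self-conjugate $\spin^c$ structure $\spin_0$ of $Y = \#^k(S^2\times S^1)$, equipped with a product metric of positive scalar curvature, every critical point of the perturbed Chern--Simons--Dirac functional is reducible, and one may compute $\HMt_*(Y,\spin_0)$ by a Morse--Bott model on the Jacobian torus $T^k = H^1(Y,\mathbb{R})/H^1(Y,\mathbb{Z})$ of flat $\spin^c$ connections. Passing through the $U$-mapping cone defining $\HMtil$ then identifies $\HMtil_*(Y,\spin_0)$ with $H_*(T^k,\ztwo)\cong \Lambda^*(H_1(Y,\ztwo))$, with $H_1$ placed in degree $-1$. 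All other $\spin^c$ structures are non-torsion on $Y$ and contribute zero, so the total $\HMtil$ agrees with this summand.

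Next, I show that $\jmath_*$ acts as the identity on $\HMtil_*(Y,\spin_0)$. The conjugation involution on the configuration space, restricted to the space of flat $\spin^c$ connections, is the inversion $x\mapsto -x$ on $T^k$; choosing the small perturbations $\q,\hat{\q}$ and the regular homotopy $k$ from Section \ref{involutive} in a $\jmath$-equivariant manner (for instance using a sum of $\jmath$-invariant height functions on the $S^1$-factors of $T^k$, all of whose critical points lie at the $2^k$ fixed points of the inversion) realises this identification at the chain level. The induced action on $H_i(T^k,\ztwo)\cong \Lambda^i H_1(T^k,\ztwo)$ is multiplication by $(-1)^i$, which equals the identity in characteristic two.

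Finally, the exact triangle
\[
\HMtil_*(Y)\xrightarrow{1+\jmath_*}\HMtil_*(Y)[-1]\to \HMItil_*(Y)\to \HMtil_*(Y)
\]
collapses, because $1+\jmath_*=0$: it becomes a split short exact sequence of graded $\ztwo$-vector spaces, giving $\HMItil_*(Y)\cong \HMtil_*(Y)\oplus \HMtil_*(Y)[-1]$. The $\Br$-module structure coming from the $Q$-filtration has $Q$ acting as the connecting isomorphism $\HMtil_*(Y)\xrightarrow{\sim}\HMtil_*(Y)[-1]$, so this is an isomorphism of $\Br$-modules identifying the result with $\Lambda^*(H_1(Y,\ztwo))\otimes \Br$.

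The main obstacle lies in the chain-level step above: strictly speaking $\tilde{\iota}$ is defined via a continuation map $\tilde{\Phi}$ along a regular homotopy $k$ from $(\jmath_*\q,\jmath_*\hat{\q})$ to $(\q,\hat{\q})$, so to read its action off from the geometric picture on $T^k$ one needs to verify that a $\jmath$-equivariant choice of $(\q,\hat{\q})$ remains regular (so that $k$ may be taken constant) and that the resulting Morse--Bott model genuinely computes $\HMtil$ together with its involution. A cleaner alternative that I would fall back on is induction on $k$: applying the surgery exact triangle of Section \ref{surgery} to an unknot in a small ball of $\#^{k-1}(S^2\times S^1)$ with framings $(\infty,0,1)$ yields a triangle $\HMItil(\#^{k-1})\to \HMItil(\#^k)\to \HMItil(\#^{k-1})\to$, and combining the resulting rank estimate with the upper bound $\dim\HMItil_*(Y)\le 2\dim\HMtil_*(Y) = 2^{k+1}$ coming from the $\HMtil$--$\HMItil$ triangle forces equality, whence $1+\jmath_*=0$ and the stated isomorphism follows as above.
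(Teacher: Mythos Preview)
Your overall strategy --- prove that $\jmath_*$ is the identity on $\HMtil_*(Y)$ and then read off the result from the mapping-cone description of $\widetilde{CI}$ --- is exactly the paper's strategy, and your final paragraph about the $Q$-action is correct. The difference lies entirely in how $\jmath_*=\mathrm{id}$ is established.

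The obstacle you flag is real and is precisely the point where your argument is incomplete. A $\jmath$-equivariant perturbation cannot be regular in the ordinary sense: at each reducible the perturbed Dirac operator is quaternionic-linear, so every eigenvalue has even multiplicity and the spectrum is never simple. Hence the heuristic ``inversion on $T^k$ induces $(-1)^i$ on $H_i(T^k;\ztwo)$'' does not literally apply to the Floer chain complex you have in hand. The paper resolves this by passing to the Morse--Bott chain complex of \cite{Lin}: with a $\jmath$-equivariant perturbation the critical set in the blow-up becomes a collection of $2$-spheres $[C^i_\alpha]$ (projectivized quaternionic eigenlines) on which $\jmath$ acts antipodally, and in this model one may take $\tilde\iota=\jmath$ on the nose, with no continuation map needed. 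The paper then writes down explicit chain-level representatives for the generators of $\HMtil_*$ (a point in $[C_\alpha^0]$ together with lower arcs) and checks directly that each is $\jmath$-invariant up to boundary. This explicit description is not incidental: it is reused in Lemmas \ref{S2S2} and \ref{maps}, so the extra work pays off downstream.

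Your fallback induction does not close the gap as written. Both the surgery triangle for the unknot and the $\HMtil$--$\HMItil$ triangle yield only the \emph{upper} bound $\dim\HMItil_*(\#^k)\le 2^{k+1}$; combining two upper bounds forces nothing. To make the induction work you would need a matching lower bound --- for instance by observing that one of the three cobordisms in the unknot triangle is $(I\times\#^{k-1})\#\overline{\mathbb{CP}}^2$ and invoking the blow-up vanishing proved in Section~\ref{surgery}, so that the triangle degenerates to a short exact sequence --- but you have not supplied this step.
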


To perform the computations in this sections it will be helpful to reduce to the Morse-Bott setting of $\Pin$-monopole Floer homology. In particular, for a three-manifold $Y$ equipped with a self-conjugate spin$^c$ structure, it is associated in \cite{Lin} a chain complex $\check{C}(Y,\spin)$ whose homology is $\HMt_*(Y,\spin)$ such that $\jmath_*$ induces a chain automorphism. This is done by considering $\jmath$-equivariant perturbations (this necessarily introduces Morse-Bott singularities). As a Morse-Bott chain complex, it involves (generalized) singular chains in the critical submanifolds, and the differential takes into account fibered products with the compactified moduli spaces of trajectories connecting different critical submanifolds. We can use such a chain complex to define $\widetilde{CI}_*(Y,\spin)$: while we will still need to consider a chain homotopy between $\check{U}_{\hat{\q}}$ and its conjugate, the main advantage is that $\iota$ can be taken to coincide with $\jmath$. The proof that such a chain complex is quasi-isomorphic to the one defined in the previous section is essentially contained in Chapter $3$ of \cite{Lin}.

\begin{proof}
We start by discussing the case $k=1$, for which the critical set (for a certain choice of generic $\jmath$-equivariant perturbation) can be described as follows (see Chapter $4$ of \cite{Lin}). There are two critical points $\alpha_0,\alpha_1$, which are the reducible solutions corresponding to the spin connections, and correspondingly there are in the blow up two sequences of critical submanifolds $[C_0^i]$ and $[C_1^i]$ for $i\in\mathbb{Z}$, each of them being a copy of $S^2$. Furthermore, the action of $\jmath$ is given by the antipodal map of $S^2$. This description follows because under the $\jmath$-equivariance assumption, the perturbed Dirac operators involved are quaternionic linear, hence their eigenspaces are (under the genericity assumption) two-dimensional over $\mathbb{C}$. A generator of the zero dimensional singular homology of $[C_k^i]$ lies in degree $k+4i$, and the total differential of $\check{C}(Y,\spin)$ is the sum of the differential in each critical submanifold together with the map that takes a chain $\sigma$ in $[C_1^i]$ to the chain $(1+\jmath)\sigma$ in $[C_0^i]$ (under the identifications above). The generators of the homology $\HMtil_*(S^2\times S^1)$, which is identified with $H_1(Y,\ztwo)$, can be represented as follows:
\begin{itemize}
\item the one in degree zero by a point $p_0\in [C_0^0]$;
\item the one in degree one by a point $p_1\in [C_1^0]$ plus an arc $\lambda$ in $[C^0_0]$ running from $p_1$ to $\jmath_* p_1$ (under the usual identification).
\end{itemize}
Consider now the involutive chain complex $\widetilde{CI}_*(Y,\spin)$. We denote by $Q\cdot[C_k^i]$ the corresponding critical submanifold in the summand $Q\cdot\check{C}_*(Y,\spin)$. From our description of the generators, it is clear that the map
\begin{equation*}
\HMtil_*(S^2\times S^1)\stackrel{Q\cdot\iota}{\longrightarrow} Q\cdot \HMtil_*(S^2\times S^1)
\end{equation*}
is the identity, and the result immediately follows.
\par
The case of general $k$ is essentially the same, the key point being that the manifold has positive scalar curvature (see Section $36.1$ in \cite{KM}). Fix a spin structure on $Y$, together with a basis of $H^1(Y,\ztwo)$. These choices provide an identification for the torus of flat connections
\begin{equation*}
\mathbb{T}^k=(S^1)^k
\end{equation*}
and we consider a Morse function $f$ on this torus which is the sum of the height function on each factor, so that the original spin structure is the maximum. This function has exactly $2^k$ critical points. These are identified with the set of spin structures and, via the choice of basis, with $\{0,1\}^k$. Under this identification the spin structure we started with corresponds to the string of ones.
Picking then a small perturbation as in (\ref{pertS2S1}), we see that these reducible solutions corresponding to the spin structure are the only critical points of the functional. Hence for each $\alpha\in\{0,1\}^k$ we get a sequence $[C_{\alpha}^i]$ of critical submanifolds (each identified with $S^2$), whose generator of the zero dimensional singular homology has absolute grading $l(\alpha)+4i$, where $l(\alpha)$ is the number of ones in $\alpha$. Consider the lexicographical order on on $\{0,1\}^k$. We then have the following description under the identification of $\Lambda^*(H_1(Y,\ztwo))$ with $\{0,1\}^k$: the generator corresponding to $\alpha$ is represented by a single point in $[C_{\alpha}^0]$, together with other chains in critical submanifolds $[C_{\beta}^k]$ with $\beta<\alpha$. Hence again
\begin{equation*}
\HMtil_*(Y)\stackrel{Q\cdot\iota}{\longrightarrow} Q\cdot \HMtil_*(Y)
\end{equation*}
is the identity, and the result follows.
\end{proof}

From this description, we can also describe the generator in $\HMItil_*(Y)$ corresponding to $\alpha$: it consists of a point $p$ in $[C_{\alpha}^0]$, an arc $\lambda$ in $Q\cdot[C_{\alpha}^0]$ connecting $p$ and $\jmath_* p$, and other chains in $[C_{\beta}^i]$ and $Q\cdot[C_{\beta}^i]$ for $\beta<\alpha$. With this in hand, we can show the following (which is ultimately the reason why Bar Natan's homology arises in this context).

\begin{lemma}\label{S2S2}
Consider the cobordism $S^2\times S^2\setminus (D^4\amalg D^4)$ from $S^3$ itself. Then the induced map on $\HMItil_*(S^3)\equiv\Br$ is multiplication by $Q$.
\end{lemma}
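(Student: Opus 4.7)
By the definition of the involutive cobordism map recalled above, $\widetilde{mi}(W)$ acts on $\widetilde{CI}(S^3) = \Ctil(S^3) \oplus Q\cdot\Ctil(S^3)$ as an upper-triangular matrix, with the ordinary cobordism map $\tilde{m}(W)$ on the diagonal and the chain homotopy $\tilde{h} \circ \tilde{\jmath}_*$ in the off-diagonal entry. Since multiplication by $Q$ on $\Br$ is characterised by vanishing on the associated graded of the $Q$-filtration while sending the top generator to the bottom, my plan is to show separately: (i) $\tilde{m}(W)$ induces the zero map on $\HMtil_*(S^3) = \ztwo$, so the image of the generator $(x_0, 0)$ of $\HMItil_*(S^3)$ lands in the $Q$-summand in homology; and (ii) $\tilde{h}\,\tilde{\jmath}_* x_0$ is homologous to $x_0$ modulo $(1+\tilde{\iota})$-coboundaries, so its contribution represents the generator $Q$ of $\Br$.

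For (i), I would first check the self-conjugate (spin) spin$^c$ structure on $W$ separately. A direct computation with $\chi(W) = 2$ and $\sigma(W) = 0$ gives formal dimension $d(W,\spin_0) = -1$, so $\tilde{m}(W,\spin_0)$ shifts absolute grading by $-1$ and therefore vanishes on $\HMtil_0(S^3) \to \HMtil_0(S^3)$. Non-self-conjugate spin$^c$ structures on $W$ occur in conjugate pairs, and with $\jmath$-equivariant data their contributions to $\tilde{m}(W)$ cancel exactly as in the $\overline{\mathbb{CP}}^2$-blowup proposition above, so the total diagonal contribution is zero in homology.

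For (ii), I would adopt the Morse--Bott framework of \cite{Lin} used in the proof of Lemma \ref{E1}. Choosing $\jmath$-equivariant perturbations on $W$, the reducible locus for the self-conjugate spin structure is a single $2$-sphere---the projectivised kernel of a quaternionic-linear Dirac operator---on which $\jmath$ acts antipodally, exactly as for the critical submanifolds of $S^2 \times S^1$ appearing in Lemma \ref{E1}. The parameterised moduli space underlying $\tilde{h}$, consisting of solutions on the punctured $W$ with the perturbation at the puncture sweeping along a regular homotopy from $\hat{\q}$ to $\jmath_* \hat{\q}$, restricts via gluing and fibered products to an arc in this $S^2$-family joining a point to its antipode. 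A chain-level accounting in the Morse--Bott complex, in the spirit of the one performed in the proof of Lemma \ref{E1}, then identifies the resulting chain with $x_0$ in $\HMtil_*(S^3)$.

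The main obstacle is the Morse--Bott calculation in step (ii), which is the involutive analogue of the $V$-action computation in Proposition~$3$ of \cite{Lin2}. It requires careful bookkeeping of singular chains and fibered products along a family of perturbations; the simple connectedness of the space of small regular perturbations on $S^3$ (Lemma \ref{sc}) is essential in showing that the resulting count is independent of the chosen regular homotopy from $\hat{\q}$ to $\jmath_* \hat{\q}$, and hence that the class $[\tilde{h}\,\tilde{\jmath}_* x_0] = [x_0]$ is well-defined in $\HMtil_*(S^3)$.
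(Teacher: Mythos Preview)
Your step (i) is fine in outcome: the diagonal $\tilde m(W)$ vanishes on $\HMtil_*(S^3)$, most directly because $b_2^+(W)=1>0$ (as noted in the introduction). Your degree computation for the spin structure is also correct.

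Step (ii) is where there is a genuine gap, because you are mixing two models in a way that changes the shape of the computation. In the non-equivariant model you start from, the generator of $1\in\Br$ is indeed $(x_0,0)$, and any $Q$-contribution must come from the off-diagonal homotopy $\tilde h\circ\tilde\jmath_*$; but $\tilde h$ is only a chain homotopy, and you give no actual calculation of the relevant one-parameter moduli spaces. When you then pass to the Morse--Bott setting with $\jmath$-equivariant perturbations, the situation changes: now $\jmath_*\q=\q$, the continuation maps are identities, the cobordism map commutes with $\jmath$, and hence $\tilde h$ may be taken to be zero. So in that model the off-diagonal mechanism you are aiming for disappears---but correspondingly the chain-level representative of $1$ is no longer $(x_0,0)$. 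In the Morse--Bott model the critical submanifolds $[C_0^i]\cong S^2$ carry the antipodal $\jmath$-action, and (as recorded just before the lemma) the generator of $1$ is a point $p\in[C_0^0]$ together with an arc $\lambda\subset Q\cdot[C_0^0]$ from $p$ to $\jmath_*p$.

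The paper's proof works entirely in this Morse--Bott model, and the whole content is a single topological input from \cite{Lin2}: after a suitable $\jmath$-equivariant perturbation, the compactified moduli space $M_0$ of solutions on $W$ asymptotic to $[C_0^0]$ at both ends is a closed $1$-manifold (an odd number of circles) whose evaluation maps descend to generators of $H_1(\mathbb{R}P^2)$. Since the arc $\lambda$ also descends to a generator of $H_1(\mathbb{R}P^2)$, it meets $M_0$ transversely in an odd number of points; pushing forward by the outgoing evaluation yields the point generator of $Q\cdot[C_0^0]$. The component $\tilde m(p)$ vanishes for degree reasons. Thus the (diagonal) involutive cobordism map sends $(p,Q\lambda)$ to the class $Q\in\Br$, with no homotopy term entering at all.

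So the missing ingredient in your plan is precisely this description of $M_0$. Your assertion that the relevant reducible locus ``is a single $2$-sphere'' and that the parametrised family ``restricts to an arc joining a point to its antipode'' does not match the actual structure, and your sketch neither invokes the result of \cite{Lin2} nor supplies a substitute for it.
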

\begin{proof}
This follows from the following characterization of the moduli spaces on $S^2\times S^2$ (Proposition $3$ in \cite{Lin2}): after applying a suitable $\jmath$-equivariant perturbation, the compactified moduli space $M_i$ of solutions asymptotic to $[C_0^i]$ on both ends is a one-dimensional smooth compact manifold without boundary such that both evaluation maps are generators of the one dimensional homology of the quotient (which is a copy of $\mathbb{R}P^2$). In particular, it consists of an odd number of circles. As discussed above, the generator of $1\in\Br$ has a representative given by a point in $p\in[C_0^0]$ together with an arc $\lambda$ in $Q\cdot[C_0^0]$ connecting $p$ to $\jmath_* p$. Under transversality assumptions, the moduli space $M_i$ will intersect this arc in an odd number of points, so that it defines the generator of $Q\cdot[C_0^0]$ in the outgoing end.
\end{proof}

As pointed out in the introduction, the identification provided in the previous result is not canonical, but relies on the choice of a spin structure on $Y$. By a classical observation of Turaev (\cite{Tur}), there is a natural bijection between spin structures on $\Sigma(L)$ and the \textit{quasi-orientations} of $L$, i.e. orientations of $L$ up to global reversal. Roughly speaking, the unique spin structure on $S^3$ induces by pull back a spin structure on the double cover of $S^3\setminus L$. This does not extend to the whole $\Sigma(L)$, but it does after it is suitably twisted. Furthermore, these twistings can be identified with the quasi-orientations of $L$.
\par
For each full resolution of $D$, which provides a collection of cycles in the plane, we orient each of the cycles according to the parity of the number of cycles it is cointained it. More precisely, we can color the domains in the complement of the plane in black and white so that
\begin{itemize}
\item the unbounded component is black;
\item each curve divides a black component for a white component.
\end{itemize}
We then orient the cycles as the boundaries of the white domains. This choice is particularly convenient when studying the maps induced by merging/splitting of cycles, as one readily check that there is an orientation on the corresponding handle attachment cobordism $C$ that induces the given orientations on the boundaries. Passing on the branched double covers, this implies that (as in the bijection described above) on the branched double cover $\Sigma(C)$ there is a natural spin structure inducing the given ones on the boundary.
\begin{example}
Consider the merging of two circles into one. The corresponding branched double cover is $(D^2\times S^2)\setminus D^4$. This cobordism only has one spin structure, while the boundary component $S^2\times S^1$ has two. The choice of orientations of the link and surface above implies that the spin structure on the cobordism restricts to the given one on the boundary.
\end{example}
We are now ready to describe the maps that arise as the differentials on the $E^1$-page of the link spectral sequence. For $Y=\hash^k S^2\times S^1$ we can fix a standard basis of $H_1(Y)$ given by $[\gamma_i]$, $i=1,\dots, k$, where $[\gamma_i]$ is a standard circle in the $i$th summand.
\begin{lemma}\label{maps}
Let $Y=\hash^k S^2\times S^1$. Suppose $K$ is a standard circle in one of the factors, and let $Y'=Y_0(K)$ (which can be identified with $\hash^{k-1} S^2\times S^1$). Without loss of generality, we can assume $[K]=[\gamma_1]$. Suppose we have fixed a spin structure on the corresponding cobordism $W'$, and identify the Floer groups of the boundaries as in Lemma \ref{E1} using the induced spin structure. Then the map of $\Br$-modules
\begin{equation*}
\HMItil_*(W):\HMItil_*(Y)\rightarrow \HMItil_*(Y')
\end{equation*}
acts for $\xi\in\Lambda^*(H_1(Y'))$ as
\begin{align*}
[\gamma_1]\wedge \xi&\mapsto Q\cdot \xi\\
\xi&\mapsto\xi
\end{align*}
where we naturally identify $[\gamma_i]$ for $i\geq 2$ as a basis for $H_1(Y')$.
\par
Dually, let $[K]$ be an unknot and consider $Y''=Y_0(K)$ (which can be identified with $\hash^{k+1} S^2\times S^1$). Denote by $[\gamma_0]$ the class of the new generator, and again assume that the identifications of the groups on the boundary are respect to spin structure induced by a spin structure on the cobordism $W''$. Then the map of $\Br$-modules
\begin{equation*}
\HMItil_*(W''):\HMItil_*(Y)\rightarrow \HMItil_*(Y'')
\end{equation*}
acts for $\xi\in \Lambda^*(H_1(Y))$ as
\begin{equation*}
\xi\mapsto [\gamma_0]\wedge\xi,
\end{equation*} 
where again we identify the $[\gamma_i]$ with the corresponding generators in $H_1(Y'')$.
\end{lemma}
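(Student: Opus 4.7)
The plan is to decompose the cobordisms $W'$ and $W''$ as boundary-connect-sums of a basic cobordism and a trivial cobordism, reducing the problem to the basic cobordism $C = D^2 \times S^2 \setminus D^4$ (viewed as a cobordism $S^2 \times S^1 \to S^3$, or its reverse $C_1: S^3 \to S^2 \times S^1$), and then to determine the induced map on $\HMItil$ by combining the known non-involutive calculation, degree considerations, and the composition identity provided by Lemma \ref{S2S2}, with one remaining constant pinned down by a direct moduli-space argument. To carry out the reduction: since $K = \gamma_1$ lies in the first $S^2 \times S^1$ summand and is disjoint from the others, the 2-handle cobordism $W'$ is diffeomorphic (as a spin cobordism) to the boundary-connect-sum of $C$ with the trivial cobordism $[0,1] \times \#^{k-1}(S^2 \times S^1)$. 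A neck-stretching argument along the separating $S^3$, parallel to those in Chapter $26$ of \cite{KM}, shows that $\HMItil(W')$ factors as $\HMItil(C) \otimes \mathrm{id}$ under the tensor-product identification of $\HMItil(Y)$ induced by the chosen spin structure on $W'$. The identical reduction handles $W''$ via $C_1$, so it suffices to compute $\HMItil(C)$ and $\HMItil(C_1)$.

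By the non-involutive computation of Bloom \cite{Blo} (which realizes the Khovanov Frobenius algebra), the maps on $\HMtil$ are $\HMtil(C)(1) = 1$, $\HMtil(C)(\gamma) = 0$, and $\HMtil(C_1)(1) = \gamma$. Since $\widetilde{CI}(Y) = \Ctil(Y) \oplus Q \cdot \Ctil(Y)$ is a mapping cone and the cobordism map on $\HMItil$ is block lower-triangular with diagonal equal to the $\HMtil$ map, preservation of the absolute grading forces
\[
\HMItil(C)(1) = 1, \quad \HMItil(C)(\gamma) = \delta\, Q, \qquad \HMItil(C_1)(1) = \gamma + c\, Q
\]
for some $\delta, c \in \ztwo$, with the action on the remaining generators determined by $\Br$-linearity. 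The composition $C \circ C_1$ is diffeomorphic to $S^2 \times S^2 \setminus (D^4 \amalg D^4)$, which by Lemma \ref{S2S2} acts on $\HMItil(S^3) = \Br$ as multiplication by $Q$; evaluating this on the generator $1$ gives the single equation $\delta + c = 1$.

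To split this equation I would compute $\HMItil(C)(\gamma)$ directly. Representing $\gamma$ by the Morse-Bott cycle $(p_1, \lambda)$ described after Lemma \ref{E1}, namely a point $p_1 \in [C_1^0]$ together with an arc $\lambda \subset Q\cdot[C_0^0]$ connecting $p_1$ to $\jmath p_1$, I would analyze the parametrized one-dimensional moduli space on $C$ whose incoming asymptote lies on $[C_1^0]$ and whose outgoing asymptote is the appropriate isolated reducible critical point on $S^3$. By the same intersection argument as in the proof of Lemma \ref{S2S2}, this moduli space meets $\lambda$ transversely in an odd number of points, yielding $\HMItil(C)(\gamma) = Q$, i.e., $\delta = 1$ and consequently $c = 0$. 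The main obstacle is this final moduli-space computation: one must adapt the Morse-Bott Floer-theoretic framework of \cite{Lin} to a cobordism with mixed asymptotic behavior (Morse-Bott on the $S^2 \times S^1$ end, isolated reducibles on the $S^3$ end) and a compatible $\jmath$-equivariant perturbation, with the transversality and compactness arguments modelled on Chapter $36$ of \cite{KM} and relying on the simple connectivity of the space of regular small perturbations established in Lemma \ref{sc}.
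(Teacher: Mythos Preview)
Your overall architecture matches the paper's: both arguments hinge on the composite $W'\circ W''$ being the product cobordism connect-summed with $S^2\times S^2$, and both invoke Lemma~\ref{S2S2} to see that this composite acts by multiplication by $Q$. The difference is in which half you compute directly. The paper computes $W''$ (your $C_1$) first and then deduces $W'$ from the composition; you compute $W'$ (your $C$) directly and back out $W''$ from the constraint $\delta+c=1$.

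This choice makes your life harder than it needs to be. The paper's direct computation of $W''$ is clean: with a positive-scalar-curvature metric and small $\jmath$-equivariant perturbation, the only solutions on $(D^2\times S^2)\setminus D^4$ are reducible over the unique spin connection, the relevant moduli spaces are two-dimensional copies of $S^2$, and both evaluation maps are diffeomorphisms. From this one reads off immediately that $\HMItil(W'')(\xi)=[\gamma_0]\wedge\xi$ (so $c=0$), and the argument works uniformly in $k$ without needing a K\"unneth-type neck-stretching reduction to the $k=0$ case. By contrast, the direct computation you propose for $C$ is genuinely more delicate, as you yourself flag: you need to set up a Morse--Bott cobordism map with mixed asymptotics, control a one-dimensional moduli space, and argue that it hits a specific arc in an odd number of points. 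You have not actually carried this out, and the analogy with Lemma~\ref{S2S2} is not tight enough to substitute for it (that lemma concerns a closed moduli space on $S^2\times S^2$ whose evaluation is known to generate $H_1(\mathbb{RP}^2)$; the corresponding statement for $C$ is not established anywhere in the paper).

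There is also a small inaccuracy in your chain-level description of $\gamma$: per the description following Lemma~\ref{E1}, the generator of $\HMItil$ at $\alpha$ is a point in $[C_\alpha^0]$ together with an arc in $Q\cdot[C_\alpha^0]$ (plus lower chains), so for $\gamma$ the arc lives in $Q\cdot[C_1^0]$, not $Q\cdot[C_0^0]$. This does not break your strategy, but it does mean the intersection argument you sketch would need to be reworked. The cleanest fix is simply to reverse the order: compute $W''$ directly as the paper does, then use your composition identity to read off $\delta=1$.
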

\begin{proof}
Consider first the case in which $k=0$ and the cobordism is $W''$. Choose a metric with positive scalar curvature on $S^2\times S^1$. There are two reducible critical points $[\alpha_0]$ and $[\alpha_1]$, corresponding to the two spin connections of spin structures $\spin_0$ and $\spin_1$. Suppose that $\spin_0$ is the spin structure induced by the cobordism. We can then add equivariant regular perturbations as in \cite{Lin} so that the only critical points are the $[\alpha_i]$, and $[\alpha_0]$ lies in degree one less with respect to $[\alpha_1]$. As we are only interested in reducible moduli spaces, and as the map induced by the cobordism has degree $-1$, the choice we have made implies that the moduli spaces on the cobordism can be made transverse without recurring to $\jmath$-equivariant \textsc{asd}-perturbations. In particular, the relevant moduli spaces are all two dimensional and reducible (lying over the spin connection on the cobordism) and consist of copies of $S^2$ for which the two evaluation maps at the endpoints are diffeomorphism. The case of general $k$ is proved without significant modifications.
\par
To compute the map induced by $W'$, we can suppose that the curve $K$ is the cycle of the new $S^2\times S^1$ summand obtained from a cobordism of the form $W''$. Fix a spin structure on the ends of $W'$ and $W''$ diffeomorphic to $\hash^{k-1} S^2\times S^1$, and equip $W'$ and $W''$ with the unique spin structure extending them. We can then form the spin cobordism
\begin{equation*}
W'\cup_{\varphi} W''
\end{equation*}
where $\varphi$ is an orientation-preserving diffeomorphism between the boundary components diffeomorphic to $\hash^{k} S^2\times S^1$ identifying the induced spin structures. Notice that the result is \textit{not} the composition of the cobordisms in the surgery exact triangle. Indeed, the composition is spin, and it is diffeomorphic to the product $[0,1]\times (\hash^k S^2\times S^1)$ connected sum with $S^2\times S^2$. Combining Lemma $3$ with the usual neck stretching argument we obtain that the induced map is multiplication by $Q$. The result then follows from the computation of $W''$.
\end{proof}

\begin{proof}[Proof of Theorem \ref{main}]
We just need to put together the various pieces discussed above. Fix a pointed diagram $D$. A full resolution gives rise to a disjoint union of circles in the plane $S_0,\dots, S_k$, where we suppose that $S_0$ is the one containing the basepoint. The branched double cover is a copy of $\hash^{k}S^2\times S^1$, and the preimage of a standard arc connecting $S_0$ to $S_i$ gives rise to a generator $[\gamma_i]$ in the $i$th component. The orientation convention we have chosen fixes a spin structure on the branched double cover, hence an identification of the associated group as in Lemma \ref{E1}. This is readily identified with the corresponding (reduced) vector space in Bar-Natan's chain complex via the isomorphism of $\Br$-modules
\begin{equation*}
\varphi: \HMItil_*(\hash^{k}S^2\times S^1)\rightarrow V^{\otimes k+1}\otimes\ztwo[u]/u^2
\end{equation*}
defined as follows: for a primitive element $\xi\in \Lambda^*(H_1(\hash^{k}S^2\times S^1))$, we assign $\vp$ to the pointed circle and to the $i$th circle $\vm$ if $[\gamma_i]$ appears in $\xi$, and $\vp$ otherwise, and finally we identify $Q$ with $u$.
\par
Consider now merging/splitting cobordisms: there are in total four cases, depending on whether the circle with the basepoint is involved or not. We focus on the case in which the circles with basepoints are not involved, as the other two are analogous. Consider the case of the splitting of a circle $S_1$ into two circles $S_1$ and $S_1'$. This is obtained by doing surgery on a small unknot, and the natural choice for the generator new component, so that the description of Lemma \ref{maps} holds, is the preimage $[\gamma']$ of a standard arc connecting $S_1$ to $S_1'$ (see Figure \ref{arcs}).
\par
Indeed, our convention on orientations is compatible with the merging of the circles, hence there is a natural spin structure on the branched double cover inducing the given ones on the boundary. In this basis (which differs from the one discussed above), the map is given as in Lemma \ref{maps} by the wedge product with $[\gamma']$. We then define the the change of basis of $\HStil_*(\hash^k S^2\times S^1)$ as ($\Br$-modules), for $\xi\in \Lambda^*(\ztwo\langle [\gamma_2],\dots, [\gamma_k]\rangle)$ as
\begin{align*}
\xi&\mapsto \xi \\
[\gamma_1]\wedge \xi&\mapsto[\gamma_1]\wedge\xi\\
[\gamma']\wedge\xi&\mapsto [\gamma_1]\wedge\xi+[\gamma_1']\wedge\xi+Q\cdot \xi\\\
[\gamma_1]\wedge[\gamma']\wedge\xi&\mapsto[\gamma_1]\wedge[\gamma_1']\wedge\xi,
\end{align*}
where the old basis is $[\gamma_1],[\gamma'],[\gamma_2],\dots,[\gamma_k]$ and the new one is $[\gamma_1],[\gamma_1'],[\gamma_2],\dots,[\gamma_k]$. The asymmetric nature of this map depends on the choice we have made of considering $[\gamma_1]$ as the generator before the splitting. Under this identification, the map corresponding to the splitting is exactly the map $m$ in Bar-Natan's \textsc{tqft}: the most interesting is the one arising from the third line of the identification above, which implies the coproduct
\begin{equation*}
\vp\mapsto \vp\otimes \vm+\vm\otimes \vp+u\vp\otimes\vp.
\end{equation*}
For the merging, we are reading Figure \ref{arcs} from right to left, and the maps provided by Lemma \ref{maps} are readily identified with the multiplication $m$.
\end{proof}

\begin{figure}
  \centering
\def\svgwidth{0.8\textwidth}
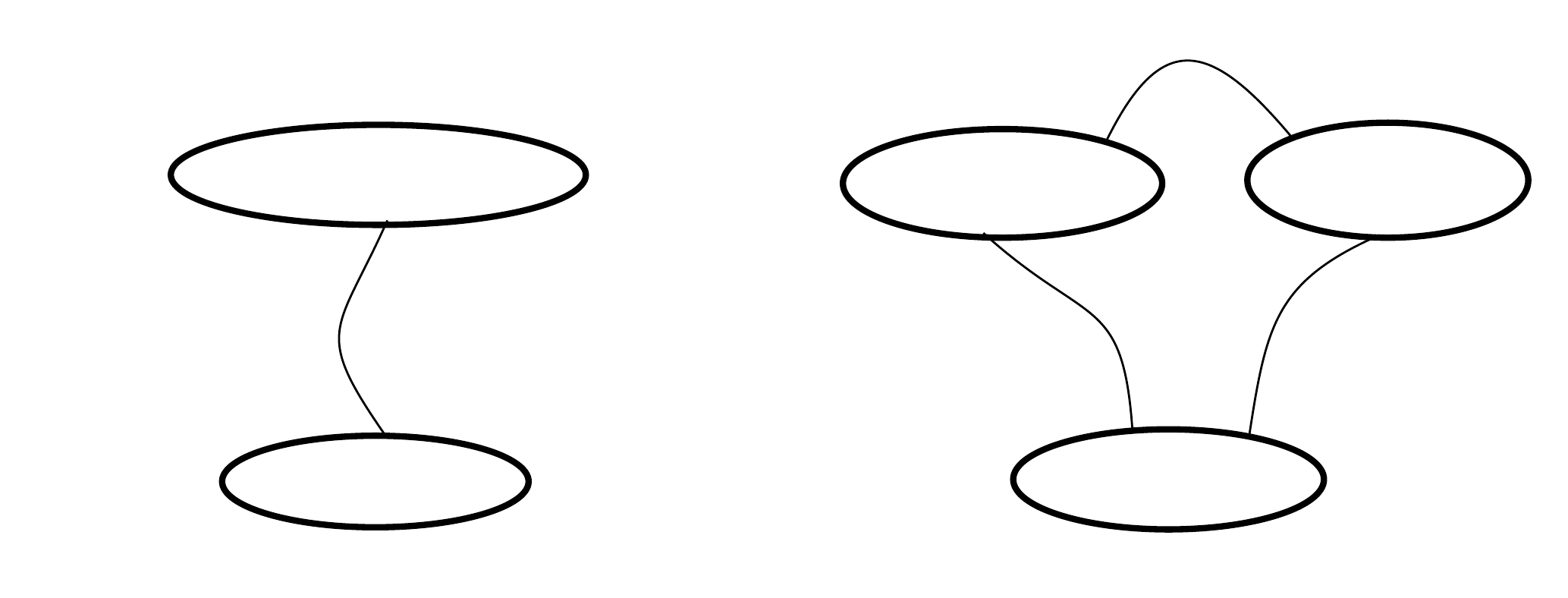
\caption{Reading the figure from left to right we have a splitting. The natural generator of the new copy $S^2\times S^1$ summand is the preimage of the arc $\gamma'$ going from $S_1$ to $S_1'$, and we need to perform a change of basis (as $\Br$-modules) to return to the usual basis induced by $\gamma_1$ and $\gamma_1'$. The circles are oriented so that the induced cobordism has a compatible orientation.}
\label{arcs}
\end{figure}

\vspace{1.5cm}
\bibliographystyle{alpha}
\bibliography{biblio}
\end{document}